\newcommand{\ZZ}{\mathbb{Z}}
\newcommand{\RR}{\mathbb{R}}
\newcommand{\NN}{\mathbb{N}}
\newcommand{\CC}{\mathbb{C}}
\newcommand{\Ab}{{\boldsymbol{A}}}
\newcommand{\Bb}{{\boldsymbol{B}}}
\newcommand{\Cb}{{\boldsymbol{C}}}
\newcommand{\Eb}{{\boldsymbol{E}}}
\newcommand{\Fb}{{\boldsymbol{F}}}
\newcommand{\Gb}{{\boldsymbol{G}}}
\newcommand{\Hb}{{\boldsymbol{H}}}
\newcommand{\Ib}{{\boldsymbol{I}}}
\newcommand{\Lb}{{\boldsymbol{L}}}
\newcommand{\Mb}{{\boldsymbol{M}}}
\newcommand{\Pb}{{\boldsymbol{P}}}
\newcommand{\Ub}{{\boldsymbol{U}}}
\newcommand{\Vb}{{\boldsymbol{V}}}
\newcommand{\Xb}{{\boldsymbol{X}}}
\newcommand{\ab}{{\boldsymbol{a}}}
\newcommand{\bb}{{\boldsymbol{b}}}
\newcommand{\eb}{{\boldsymbol{e}}}
\newcommand{\pb}{{\boldsymbol{p}}}
\newcommand{\ssb}{{\boldsymbol{s}}}
\newcommand{\gammab}{{\boldsymbol{\gamma}}}
\newcommand{\xib}{{\boldsymbol{\xi}}}
\newcommand{\BZero}{{\boldsymbol{0}}}
\newtheorem{theorem}{Theorem}
\begin{document}

\begin{frontmatter}

\title{Partial parameterization of orthogonal\\ wavelet matrix filters\let\thefootnote\relax\footnotetext{\emph{''NOTICE: this is the authors' version of a work that was accepted for publication 
			in Journal of Applied and Computational Mathematics (doi:10.1016/j.cam.2012.11.016). 
			Changes resulting from the publishing process, such as peer review, editing, corrections,
			structural formatting, and other quality control mechanisms may not be reflected in this document. Changes may have been made to this work since it was submitted for publication''}}}

\author{Mariantonia Cotronei
}
\address{DIMET, 
Universit\`a Mediterranea di Reggio Calabria,\\Via Graziella loc. Feo di Vito, I-89122 Reggio Calabria, Italy\\
mariantonia.cotronei@unirc.it}

\author{Matthias Holschneider}

\address{Institut f\"ur Mathematik, Universit\"at Potsdam,\\ Am Neuen Palais 10,
D-14469, Potsdam,
Germany\\
hols@math.uni-potsdam.de}


\begin{abstract}
In this paper we propose a procedure which allows the construction of a large family of FIR 
$d\times d$ matrix wavelet filters  by exploiting the  one-to-one correspondence between QMF systems and orthogonal operators which commute
with the shifts by two.  A characterization
of the class of filters of full rank type that can be obtained with such procedure is given.
In particular, we  restrict our attention to a special construction  based on the representation of $SO(2d)$ in terms of the elements of its Lie algebra. Explicit expressions for the filters in the case $d = 2$ are given,
as a result of a local analysis of the parameterization obtained from perturbing the Haar
system.
\end{abstract}

\begin{keyword}
Full rank matrix filters \sep Multichannel wavelets \sep Quadrature Mirror Filters \sep Vector subdivision schemes
\MSC 65T60

\end{keyword}

\end{frontmatter}


\section{Introduction}
Parameterization of orthogonal and biorthogonal filters has been an important topic of research in the context of wavelet analysis (for the relation between
filters and wavelets see e.g. \cite{Wutam}). Pioneering works in this area have been carried out, for example, by Pollen \cite{Pollen} and  Holschneider \cite{holschneider}, who proposed parameterizations based on \emph{loop group factorization}. 
Later Sweldens \cite{Sweldens}  introduced the \emph{lifting scheme} which is now widely used to construct 
biorthogonal  families of filters. 
For yet another interesting  approach to the explicit construction of  wavelet filters based on their correspondence with representations of the so-called Cuntz relations we refer the reader, for instance, to \cite{Jorgensen}. 
\par
The aim of this paper is to extend the approach given
in \cite{holschneider} to the construction of an infinite family of orthogonal
perfect reconstruction \emph{matrix filter banks} of \emph{full rank} type. These kinds of filters, introduced  in  \cite{BacchelliCotroneiSauer02a,ContiCotronei,ContiCotroneiSauer07,ContiCotroneiSauer07S,CotroneiSauer07}, are associated to the \emph{multichannel  wavelet analysis} of signals which are vector-valued, consisting of several components typically associated to different channels. 
Signals of this type arise naturally in many application contexts; typical examples are: brain activity (EEG/MEG) data, colour images, multisensor data,  financial time series, etc. 
A classical scalar wavelet analysis   applied the single components of this kind of data might not be appropriate, because it ignores the
possible correlation among  the channels. Multichannel wavelets provide a more effective tool, able to process such type of signals as ''complete'' vectors, so to extract peculiar information from the overall behaviour of the components and to reveal/exploit inter-correlations.
\par
The main challenge in the context of  multichannel wavelet analysis is the construction of matrix filters satisfying the \emph{quadrature mirror filter} (\emph{QMF}) condition. In particular, the direct construction of the filter associated to the matrix scaling function from the QMF  constraints  is not convenient because of their non-linear nature. On the other hand, a Daubechies-like approach only gives rise to filters  which are essentially diagonal, in the sense that they can be reduced, via similarity transformations, to  scalar schemes. In \cite{ContiCotroneiSauer07S} a construction of orthogonal matrix refinable functions has been proposed as a spectral factorization problem, based on the connection between  orthogonal and interpolatory matrix subdivision schemes. Nevertheless this approach cannot in principle give rise to any closed parameterized form of the filters.
	As to the construction of the corresponding wavelet filter, unlike the scalar case, there is no trivial alternating flip trick to express it  in terms of the scaling function filter, due to the non-commutative nature of the matrix filter context. The numerical scheme presented in \cite{ContiCotronei} is an example of effective numerical approach to the problem, which, nevertheless, relies on the a priori knowledge of the scaling filter.
\par
In this work, we address both the problems (construction of the scaling function and of the wavelet) at the same time, by deriving  a procedure which provides a natural and intrinsic characterization of (quasi) all matrix QMF  filters. The main idea is to exploit the
one-to-one correspondence between matrix QMF filters and orthogonal
operators that commute with translations by 2, so that starting with a trivial system and repeatedly   applying such type of operators it is possible to produce large classes of filters. 
\par 
It is important to point out that the QMF filter construction in the matrix case can be viewed also in a multiwavelet perspective (see, for example, \cite{GHM,MicchelliSauer,StrangStrela,XiaSuter}), but  all the constructions proposed for this kind of bases do not take into account the full rank requirement (since  multiwavelets still generate multiresolution analyses for spaces of  scalar rather than vector-valued functions). Our construction, on the other hand,  can be straightforwardly applied to the realization of multiwavelet bases.
\par
The paper is organized as follows. Section 2  presents some preliminaries and notation. 
In Section 3 we give a detailed exposition of our construction and a characterization result. 
Finally, in Section 4 we examine the particular case of dimension 2 providing 
 explicit descriptions of   families of  parameterized filters, in particular those associated to the representation  of the Lie  group  of rotations in terms of infinitesimal generators.

\section{Notation and basic facts}

 By $L^2 \left( \ZZ
\right)^{d \times d}$, $d\in \NN$, we denote the space of square-summable $d \times d$--matrix valued sequences, that is
$$L^2 \left( \ZZ
\right)^{d \times d}=
\left\{
\Ab : \ZZ \to \RR^{d \times d}:\,
\left( \sum_{k \in \ZZ} \left| \Ab (k) 
  \right|_2^2 \right)^{1/2}<\infty\right\}
  $$
  where
$|\cdot|_2$ denotes the $2$-norm on the matrix space $\RR^{d\times d}$.  
For notational simplicity we write $L^2 \left( \ZZ
\right)^{d}$ for $L^2 \left( \ZZ
\right)^{d \times 1}$ and $L^2 (\ZZ)$ for $L^{2}
\left( \ZZ \right)^1$.
\par
For two  matrix sequences $\Ab,\Bb\in L^2 \left( \ZZ
\right)^{d \times d}$, we define the \emph{ inner product} as
$$\langle \Ab,\Bb \rangle =\sum_{k\in \ZZ} \Ab(k)^T\Bb(k)$$
and we say that the two sequences are
\emph{orthogonal} to each other if 
$$\langle \Ab,\Bb \rangle =\BZero,$$
where $\BZero$ denotes the $d\times d $ matrix all of whose entries are zero.
\par
In particular, given two vector sequences $\ab=\left([a_1(k),\ldots, a_d(k)]^T: \, k\in \ZZ\right) $, 
$\bb=\left([b_1(k),\ldots, b_d(k)]^T: \, k\in \ZZ\right) $ in $L^2 \left( \ZZ
\right)^{d}$, then their inner (scalar) product is:
$$<\ab,\bb>=\sum_{k\in \ZZ} \sum_{i=1}^d a_i(k) b_i(k).$$

\par
For $n\in \ZZ$, we define the \emph{Dirac delta sequence} $\delta_n$ as the sequence whose elements satisfy:
$$\delta_n(k)=\left\{ \begin{array}{ll}1, & k=n\\0,& k\ne n \end{array}\right.$$
\par
If $n\in \ZZ$,  we introduce  the \emph{translation} and  \emph{downsampling} operators $T_n$ and $D$ acting on a sequence
 $\Ab\in L^2(\ZZ)^{d\times d}$ respectively as:
$$ T_n \Ab= \Ab(\cdot -n),\quad  D\Ab=\Ab(2\cdot).$$
The transpose of the downsampling operator is the \emph{upsampling operator}, described by the action
$$(D^T\Ab)(k)=\left\{\begin{array}{ll}\Ab(k/2), & k \mbox{ even}\\ \BZero, & k \mbox{ odd}\end{array}\right.$$

\par
The $d\times d$ identity matrix is denoted with $\Ib$, and its columns, 
representing the canonical basis in $\RR^d$,  with $\eb_1,\ldots,\eb_d$.
\par
Two filters $\Ab$ and $\Bb$ are said to define a \emph{matrix QMF (Quadrature Mirror Filter) system}, if 
they satisfy the \emph{orthonormality conditions}:
$$\sum_{j\in \ZZ} \Ab(j)^T\Ab(j-2k)=\sum_{j\in \ZZ} \Bb(j)^T\Bb(j-2k)= 2\delta_0(k) \Ib,\quad k \in \ZZ,$$
$$
\sum_{j\in \ZZ} \Ab(j)^T\Bb(j-2k)=\BZero,\quad k \in \ZZ,
$$
which, using the inner product notation, may be written as:
$$
\left<\Ab, T_{2k}\Ab\right> =\left<\Bb, T_{2k}\Bb\right>  = 2 \delta_0(k)\Ib,\quad\left<\Ab, T_{2k}\Bb\right>=\BZero, \quad k\in\ZZ.
$$
For a signal $s\in L^2(\ZZ^d)$ the application of a matrix valued filter is defined in the natural way through
the \emph{convolution} operation:
$$
\Ab \ast \ssb = \sum_{k\in \ZZ} \Ab(\cdot-k)^T\ssb(k). 
$$
A QMF system provides a perfect analysis/reconstruction scheme through the following two equations:
\begin{equation}\label{eq:qmranalysisreconstruction}
\ssb \mapsto
(\ssb_0, \ssb_1) = 
( D(\Ab \ast \ssb), D(\Bb \ast \ssb)),\quad  
D^T(\widetilde\Ab \ast \ssb_0) + D^T(\widetilde\Bb\ast \ssb_1) = \ssb.
\end{equation}
Here we have used the notation $\widetilde\Ab=\Ab^T(-\cdot)$.
\par
Instead of working with matrix valued functions we may also take the following equivalent picture by considering vector valued sequences.
In analogy to the scalar case, a QMF filter system 
is generated by the dilates and translates 
of $2d$  vector-valued function $\ab_1,\ldots, \ab_d$, and $\bb_1,\dots,\bb_d$, satisfying the orthonormality property:
\begin{eqnarray*}
\langle \ab_i,\ab_j(\cdot -k)\rangle &=& 2\delta_0(k)\delta_0(i-j)\\
\langle \bb_i,\bb_j(\cdot -k)\rangle &=& 2\delta_0(k)\delta_0(i-j),\quad i,j=1,\ldots,d,\,k \in \ZZ\\
\langle \ab_i,\bb_j(\cdot -k)\rangle &=& 0.
\end{eqnarray*}
The collection of all these functions is then an orthonormal basis of $L^2(\ZZ)^d$.
We identify these vectors with the previously introduced matrix valued signal by taking them
as their column vectors (analogue for $\bb\leftrightarrow\Bb$):
$$
\Ab=[\ab_1,\ldots,\ab_d]= \sum_{j=1}^d {\ab_j }\, \eb_j^T.
$$
The decomposition and synthesis of a signal in $L^2(\RR^d)$ with respect to this orthonormal basis can then be obtained with the
filters $\Ab$ and $\Bb$  as in (\ref{eq:qmranalysisreconstruction}).
\par
We conclude this section by introducing a crucial feature which  is required to a matrix QMF system to  be  connected to a proper wavelet analysis context.
A QMF system is said to satisfy the \emph{full rank condition} if the  sequence $\Ab$
satisfies:
\begin{equation}\label{eq:fr}
\sum_{j\in \ZZ} \Ab(2j)=\sum_{j\in \ZZ} \Ab(2j+1)=\Ib.
\end{equation}
This, in particular,  implies that the  coefficients of the sequence $\Bb$ sum up to the zero matrix.
\par
It can be shown, that full rank is a necessary (but not sufficient) 
condition for these filters to actually define  a \emph{multichannel multiresolution analysis}  \cite{BacchelliCotroneiSauer02a}, which is a natural extension of the well-known multiresolution
analysis to vector-valued functions. Any (orthogonal) multichannel  MRA is, in fact,  generated 
by two square-integrable $d\times d$-matrix valued functions $\Fb$ and $\Gb$, representing, respectively, the  matrix scaling function and the  multichannel wavelet,   satisfying the 
\emph{matrix two-scale relations}:
$$\Fb=\sum_{j\in \ZZ}\Fb (2\cdot - j)\Ab(j),\quad \Gb=\sum_{j\in \ZZ}\Fb (2\cdot - j)\Bb(j). $$
In such context, the two-scale coefficient sequences $\Ab$, $\Bb$ define a matrix QMF  system and must possess the full rank property. In particular, this is related to the fact that the matrix wavelet $\Gb$ has  \emph{vanishing zeroth moment}:
$$\int_\RR\Gb(x)\,dx=\BZero.$$
We remark that, as in the scalar case, further $p-1$ vanishing moments (usually required in applications)  can be possessed by $\Gb$, that is,
$$\int_\RR x^n \Gb(x)\,dx=\BZero,\quad n=1,\ldots,p-1,$$
 only if the coefficients in the sequence $\Ab$ satisfy the additional \emph{sum rule conditions}:
$$\sum_{k\in \ZZ} (-1)^k k^n \Ab(k)=\BZero,\quad n=1,\ldots,p-1.$$

\section{The construction of matrix QMF systems}
The basic idea of the construction is easiest explained in the well known QMF setting of
a scalar valued signal.
\par
 Let $a$, $b\in L^2(\mathbb{Z})$ be such that
the set of translates by $2\mathbb{Z}$:
\begin{equation}\label{eq:basis}
\{a(\cdot-2m), b(\cdot-2m) : m\in\mathbb{Z}\}
\end{equation}
is  an orthonormal basis of $L^2(\mathbb{Z})$, so that the two sequences form a 
QMF system. Note that the distinction between $a$ and $b$ is somehow arbitrary.
However in actual filtering applications one would like $a$ to be the scaling function filter
and $b$ to be the wavelet filter. This would impose additional constraints (vanishing moments, etc) on
these sequences. 
\par
A trivial example of a QMF system in this purely algebraic setting would be
$$
a = \delta_0,\quad b=\delta_1.
$$
Consider now an orthogonal linear operator $U$ acting in $L^2(\mathbb{Z})$
with the property that it commutes with the shifts by $2$, that is:
$$
[U, T_2]:=UT_2-T_2U=0.
$$
Since $U$ is orthogonal, the image of the basis (\ref{eq:basis})
is again a basis. Since $U$ commutes with the shifts by $2$,
it has the following structure
$$
\{(Ua)(\cdot-2m), (Ub)(\cdot-2m) : m\in\mathbb{Z}\}.
$$
Therefore this basis is again a QMF system.
\par
Vice versa, any QMF system can be obtained from the trivial system in this way.
Indeed, there is exactly one linear operator 
such that: 
$$\delta_{2n}\mapsto a(\cdot-2n), \quad 
\delta_{2n+1}\mapsto b(\cdot-2n),$$
 since any linear operator is defined by its
image of a basis. 
Therefore there is an explicit one-to-one correspondence between QMF filters and
orthogonal operators that commute with translations by 2. 
Note that any such operator is defined through its image of $\delta_0$ and
$\delta_1$. The finite impulse response (FIR) filters correspond exactly to those operators which
leave invariant the non closed linear subspace of compactly  supported
sequences.
\par
Therefore, if we can construct a manifold of such operators, we automatically
have a family of QMF by applying them to any known QMF, in particular the
trivial one \{$\delta_0$, $\delta_1$\}. 
\par
A family of such operators can be obtained in the following way 
(see Fig~\ref{schema}). Pick an
arbitrary $2\times 2$ orthogonal matrix $\Mb\in O(2)$. 
Let the image of $\delta_0$ be its first column put at positions $0$, $1$
extended with zeros on both sides. 
Let 
the image of $\delta_1$ be the second column, put at positions $0$,$1$ and
extended with zeros on both sides:
$$
\delta_0\mapsto M_{1,1}\delta_0 + M_{2,1}\delta_1,\quad
\delta_1\mapsto M_{1,2}\delta_0 + M_{2,2}\delta_1,\quad
$$
Then extend mapping to a linear operator to all of $L^2(\mathbb{Z})$ by
requiring that it
commutes with the shift by two. This defines the operator $U_0(\Mb)$. 
\par
To put it differently: for a sequence $s=(s(n):\, n\in \ZZ)$ $\in$ $L^2(\ZZ)$,
apply the same orthogonal matrix $\Mb$ to the vectors $[s(2k),s(2k+1)]^T$, $k\in \ZZ$. It is plain
that this linear operator conserves the norm and that 
it satisfies the commutation property. 
\par
Now, instead of taking this grouping, we
may also consider the grouping $[s(2k-1),s(2k)]^T$. 
This defines a new operator, $U_1(\Mb)$, which can also be written as
\begin{equation}\label{eq:shiftop}
U_1(\Mb)=T_{-1} \,U_0(\Mb)\,T_1.
\end{equation}
The product of two such operators is again orthogonal and satisfies the
commutation property. Thus, for any finite sequence 
of matrices $\Mb_k\in O(2)$, $k=0,1,\dots K-1$, we have an operator
\begin{equation}\label{eq:operator}
U(\Mb_0,\Mb_1,\dots,\Mb_{K-1})= U_{\epsilon_{K-1}}(\Mb_{K-1})\cdot \ldots \cdot U_{\epsilon_{1}}(\Mb_1)\,U_{\epsilon_{0}}(\Mb_0),
\end{equation}
with $\epsilon_{0},\ldots,\epsilon_{K-1} \in \{0,1\}$, that can applied
 in order to obtain a parameterization of a family of QMF. 
\par
In this case this
parameterization is actually complete. Indeed, for a QMF of length $\leq2N$ the 
outermost blocks are all orthogonal to each other as follows from the orthonormality relations.
Therefore, the spaces spanned by the left block of the $a$ sequence and the left block of the $b$ sequence are collinear.
The same holds for the right blocks. Since they are orthogonal, there is a rotation such that
 the outermost coefficients can be sent to zero and thereby the length 
of the QMF sequence is reduced by $2$. This gives an induction argument for proving the completeness of the parameterization.

\begin{figure*}
\begin{center}
 \includegraphics[width=0.4\textwidth]{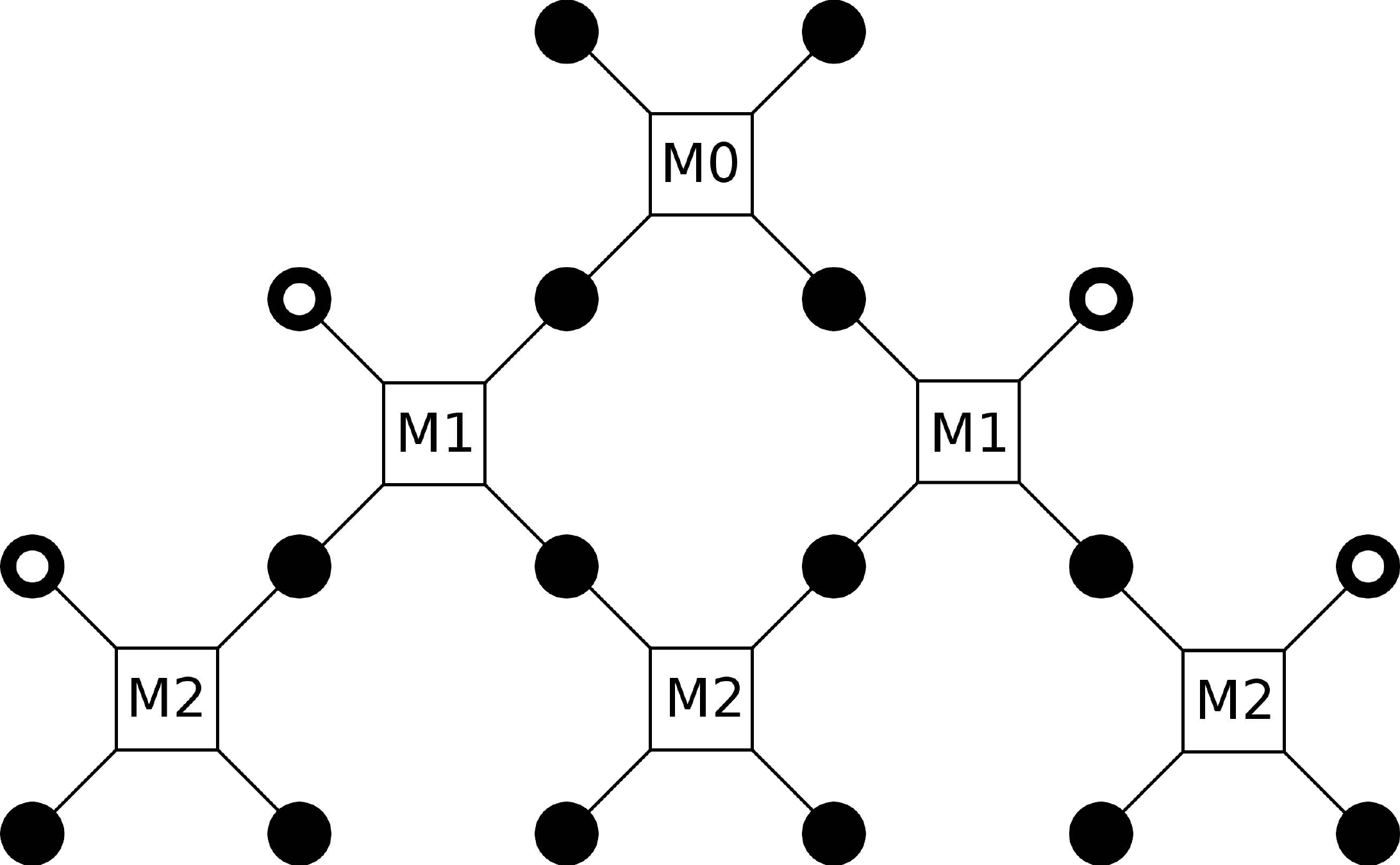}
\caption{Schematic view of the construction. The void circles are $0$.
The black dots correspond to $d$ dimensional vectors.
Note how  the support grows with each application.}\label{schema}
\end{center}
\end{figure*}

A further generalization is possible. Consider any fundamental domain $F$ for
the quotient group $\mathbb{Z}/2\mathbb{Z}$. 
By applying the same matrix operation to the grouping defined by $F$, we obtain
again a linear operator that conserves the norm and commutes with the shifts by $2$. 
Thus, for any sequence of fundamental 
domains and orthogonal matrices,  we may consider the product
$$
U_{F_K}(\Mb_K)\cdot \ldots \cdot U_{F_1}(\Mb_1)\,U_{F_0}(\Mb_0).
$$
It is clear that the same construction can be used to obtain all complex valued filters.
It is enough to replace the orthogonal group and the orthogonal matrices by the unitary analogous.
\par
We  now extend the construction of such operators to the vector case.
As in the previous section, we may identify orthogonal operators in $L^2(\ZZ)^d$ that commute with
$T_2$ and  QMF systems. The trivial vector QMF system is given by
$$
\{\eb_l \delta_0,\quad \eb_l \delta_1,\quad l=1,\dots,d\}.
$$
Pick now a matrix $\Mb\in O(2d)$. We identify $4$ submatrices
$$
\Mb = \left[
\begin{array}{cc}
  \Mb_{1,1}&\Mb_{1,2}\\
  \Mb_{2,1}&\Mb_{2,2}
\end{array}\right].
$$
The orthogonal operator defined in $L^2(\ZZ)^d$ is acting on 
the sequences as  follows:
$$
\eb_l \delta_0 \mapsto \Mb_{1,1} \eb_l \delta_0 + \Mb_{2,1}\eb_l \delta_1,\quad
\eb_l \delta_1 \mapsto \Mb_{1,2} \eb_l \delta_0 + \Mb_{2,2}\eb_l \delta_1.
$$
Again we extend this to all of $L^2(\ZZ)^d$ by requiring that the operator commutes with $T_2$.
\par
In the language of blocks, this operator acts on a vector-valued signal $\ssb\in L^2(\ZZ)^d$ as follows. 
For each $n \in \ZZ$, we identify the two vectors
$\ssb(2n)$ and $\ssb(2n+1)$ with the $2d$-dimensional vector $[\ssb^T(2n)\,\ssb^T(2n+1)]^T$.
The operation on the two vectors in the block is now multiplication with the matrix  $\Mb$.
A second family is obtained by considering, for example, 
the grouping $(2n-1,2n)$ which corresponds to another fundamental domain of $\ZZ/2\ZZ$. 
As before these operations may be combined to obtain
a family of such operators and, a fortiori, a family of matrix filters.
\par
More precisely, in matrix notation, starting from a 
matrix QMF system
\begin{equation}\label{eq:matrbasis}
\{\Ab(\cdot-2m), \Bb(\cdot-2m) : m\in\mathbb{Z}\}
\end{equation}
we apply a matrix 
 $\Mb\in O(2d)$ to the  sequence
$$
\left(\left[ \begin{array}{cc}\Ab(2n)& \Bb(2n)\\
\Ab(2n+1)& \Bb(2n+1)\end{array}\right]: \, n \in \ZZ\right)
$$
or to the sequence
$$
\left(\left[ \begin{array}{cc}\Ab(2n-1)& \Bb(2n-1)\\
\Ab(2n)& \Bb(2n)\end{array}\right]: \, n \in \ZZ\right)
$$
and recursively do the same operation to the transformed sequences.
\par

\par
It is convenient to introduce the  description of the QMF systems in terms of {\em symbols}. 
For this,  we associate with the matrix filter  $\Ab$ the  Laurent series matrix
$$
\Ab(z) = \sum_{k\in \ZZ} \Ab(k) z^{k}, \quad z\in\CC \setminus \{0\},
$$
which in fact is a  Laurent polynomial matrix, if we assume that the filter has finite support.
\par
The so called \emph{subsymbols} are defined through
$$
\Ab_{l}(z)= \sum_{k\in \ZZ} \Ab(2k+l) z^{k},\quad l=0, 1,
$$
and the following relation holds between the symbol and the subsymbols:
$$\Ab(z)=\Ab_0(z^2)+z\Ab_1(z^2).$$
Consider now also the symbol $\Bb(z)$ of the wavelet filter and the respective subsymbols $\Bb_0(z)$, $\Bb_1(z)$ and define the following $2d\times 2d$ polynomial matrix:
$$
\Lb(z) = 
\left[
\begin{array}{cc}
 \Ab_{0}(z) & \Bb_{0}(z)\\
 \Ab_{1}(z) & \Bb_{1}(z)
\end{array}
\right],$$
which is also known as the \emph{polyphase matrix}.
\par
Then the QMF conditions can be written as
 \begin{equation}\label{QMFbis}
[\Lb(z^2)]^H \Lb(z^2)=2\Ib,
\end{equation}
while the full rank requirement is equivalent to 
$$\Lb(1)=\left[ \begin{array}{cc} 
\Ib & \Ib\\
\Ib & -\Ib
\end{array}\right].
$$
Straightforward computations show that the operator $U(\Mb)$ and the translation operator $T_1$ are respectively 
 acting on the symbols  as follows:
 \begin{equation}\label{eq:symbop}
U(\Mb): \Lb(z) \mapsto \Mb \Lb(z),
\end{equation}
\begin{equation}\label{eq:symbshiftop}
T_1: \Lb(z) \mapsto 
\left[
\begin{array}{cc}
 \BZero & \Ib\\
 z^{-1}\Ib & \BZero
\end{array}
\right] \Lb(z),
\end{equation}
so that the operator described in (\ref{eq:shiftop})
 takes the following form in terms of symbols:
$$  \left[
\begin{array}{cc}
 \BZero & z\Ib\\
 \Ib & \BZero
\end{array}
\right] \Mb \left[
\begin{array}{cc}
 \BZero & \Ib\\
z^{-1} \Ib & \BZero
\end{array}
\right].
$$
Given a sequence of matrices $\Mb_k\in O(2d)$, $k=0,1,\dots, K$, by alternately applying (\ref{eq:symbop}) and
(\ref{eq:symbshiftop}) to the matrix $\Lb(z)$, we  obtain the following family of 
QMF symbols:
$$
\left[
\begin{array}{cc}
 \BZero & z\Ib\\
 \Ib & \BZero
\end{array}
\right] 
\Mb_K
\left[
\begin{array}{cc}
 \BZero & \Ib\\
 z^{-1}\Ib & \BZero
\end{array}
\right]   \,
\cdots \, \Mb_2
\left[
\begin{array}{cc}
 \BZero &z \Ib\\
 \Ib & \BZero
\end{array}
\right]
\Mb_1
\left[
\begin{array}{cc}
 \BZero & \Ib\\
 z^{-1}\Ib & \BZero
\end{array}
\right]
\Mb_0\, \Lb(z). 
$$

We conclude the Section by giving an almost characterization of  
the set of matrix QMF that can be obtained using the above construction.
\par
Let $J$ be the smallest interval that contains the support of
all sequences in the QMF system. The even blocks correspond to
grouping the elements with indices in $\{2n,2n+1\}$ 
whereas the odd blocks are the one with $\{2n-1,2n\}$.
The outermost blocks of an even or odd covering of $\ZZ$ 
are the left most and right most that intersect the support. 
The dimension of a block is defined to be the dimension of the
vector space generated by the $2d$ vectors with indices in the block. 

\begin{theorem}
All QMF systems that can be obtained through this construction
have the dimension of their outermost blocks $\leq d$.
Vice versa all matrix QMF system of finite support length 
for which the dimension of outermost block is equal to $d$
can be obtained through this construction. 
\end{theorem}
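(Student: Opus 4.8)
The plan is to reduce both assertions to statements about the two extreme Laurent coefficients of the polyphase matrix. Writing $\Lb(z)=\sum_{k=n_-}^{n_+}\Lb_k z^{k}$ with $\Lb_{n_-},\Lb_{n_+}\neq\BZero$, the identification of the even block at position $n$ with the coefficient $\Lb_n$ (and of the odd block with the analogous shifted coefficient) shows that the dimension of a block equals $\mathrm{rank}\,\Lb_n$, so the theorem is a statement about $\mathrm{rank}\,\Lb_{n_-}$ and $\mathrm{rank}\,\Lb_{n_+}$. I will use throughout that, by (\ref{QMFbis}), $\tfrac1{\sqrt2}\Lb$ is paraunitary, i.e. $\Lb(z)^H\Lb(z)=2\Ib$ on $|z|=1$; comparing the two extreme powers of $z$ in this identity, and in $\Lb(z)\Lb(z)^H=2\Ib$, yields $\Lb_{n_-}^H\Lb_{n_+}=\BZero$ and $\Lb_{n_+}\Lb_{n_-}^H=\BZero$. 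Hence the column spaces, and likewise the row spaces, of the two outermost coefficients are mutually orthogonal in $\RR^{2d}$; in particular $\mathrm{rank}\,\Lb_{n_+}=d$ forces $\mathrm{rank}\,\Lb_{n_-}\le d$.

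For the forward implication I would induct on the number of generators applied, tracking only the two outermost coefficients. A factor $U_0(\Mb)$ acts by $\Lb\mapsto\Mb\Lb$ (see (\ref{eq:symbop})), leaving the support unchanged and multiplying each boundary coefficient by the invertible $\Mb$, hence preserving its rank. A shift factor acts, after (\ref{eq:shiftop}) and (\ref{eq:symbshiftop}), as left multiplication by $\left[\begin{array}{cc}\Mb_{2,2}&z\Mb_{2,1}\\ z^{-1}\Mb_{1,2}&\Mb_{1,1}\end{array}\right]$ in the block notation of the statement. When the support genuinely grows, the new highest- and lowest-degree coefficients are $\left[\begin{array}{cc}\BZero&\Mb_{2,1}\\ \BZero&\BZero\end{array}\right]\Lb_{n_+}$ and $\left[\begin{array}{cc}\BZero&\BZero\\ \Mb_{1,2}&\BZero\end{array}\right]\Lb_{n_-}$, whose images lie in the top, respectively bottom, coordinate copy of $\RR^{d}$ inside $\RR^{2d}$, so they have rank $\le d$; when the support does not grow, orthogonality of $\Mb$ forces the relevant off-diagonal block to vanish and the boundary coefficient is merely rotated, again preserving rank. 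Since the first genuine shift already produces boundary coefficients of rank $\le d$ and every later step preserves this, all systems from the construction have outermost dimension $\le d$; a support consisting of a single block is covered by passing to the other fundamental domain, in which it splits into two half-filled boundary blocks.

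For the converse I would argue by induction on the support length $N=n_+-n_-+1$. If $N=1$ then $\Lb=\Lb_0$ is constant; since both $\Lb_0$ and the trivial polyphase matrix satisfy the same relation (\ref{QMFbis}), their quotient $\Mb=\Lb_0(\Lb_0^{\mathrm{triv}})^{-1}$ lies in $O(2d)$, and the system is $U_0(\Mb)$ applied to the trivial one. If $N\ge2$, the hypothesis gives $\mathrm{rank}\,\Lb_{n_+}=d$, and by the orthogonality recorded above the column space of $\Lb_{n_+}$ is a $d$-dimensional subspace orthogonal to that of $\Lb_{n_-}$. I would then choose $\Mb\in O(2d)$ carrying this column space onto a coordinate half of $\RR^{2d}$, so that the corresponding inverse shift operator of the form (\ref{eq:shiftop}) cancels the outermost coefficient and lowers the support length. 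Peeling boundary blocks in this way down to $N=1$, and reading the chosen matrices in reverse order, realizes the given filter through the construction.

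The hard part is the iteration in the converse. A single deflation is forced by the rank-$d$ hypothesis on the current outermost block, but to drive the induction all the way to the constant case one must control how the rank of the new boundary coefficient is inherited, through the paraunitary relations, after a block is removed — together with the bookkeeping of which end is peeled and of the fundamental-domain choice that lets a single generator lower the length without reintroducing growth at the opposite end. Exhibiting the precise $\Mb\in O(2d)$ that aligns the orthogonal outermost column spaces with the coordinate halves, and verifying that the shortened system again admits such a step, is the delicate point; it is exactly where the restriction to outermost dimension \emph{equal} to $d$ (rather than strictly smaller) is used, and thus where the word ``almost'' in the characterization originates.
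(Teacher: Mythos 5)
Your strategy is essentially the paper's own: identify block dimensions with ranks of the outermost Laurent coefficients of the polyphase matrix, derive $\Lb_{n_-}^H\Lb_{n_+}=\BZero$ from paraunitarity, and run an induction on support length in which an orthogonal matrix deflates the extremal blocks. The forward direction is sound (indeed more detailed than the paper's one-line argument, apart from the inexact claim that ``no growth'' forces the off-diagonal block of $\Mb$ itself to vanish --- it only forces $\Mb_{2,1}$ to annihilate the lower half of $\Lb_{n_+}$). The problem is the converse, where your proposal stops exactly at the point that carries the whole proof; flagging it as ``the hard part'' does not discharge it, and without it your induction dies after one step.

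Two things are missing. First, the deflation must be performed at both ends simultaneously by a single $\Mb\in O(2d)$: one generator of the construction applies one orthogonal matrix to every block of a fixed fundamental domain, so you cannot cancel the top coefficient alone and postpone the bottom end as bookkeeping. This is where the hypotheses enter: the spans $V_L$, $V_R$ of the two extremal blocks are mutually orthogonal (your paraunitarity relations, or equivalently the QMF orthogonality at maximal shift) and each is $d$-dimensional, so $\RR^{2d}=V_L\oplus V_R$ and one may choose $\Mb$ carrying $V_R$ onto the upper coordinate copy of $\RR^{d}$ and $V_L$ onto the lower one, shortening the support by one position at each end. Second, and crucially, the rank-$d$ hypothesis must be shown to be inherited by the shortened system; the paper proves this and you do not. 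The argument is short: after applying $\Mb$, pass to the other fundamental domain; the outer $d$ coordinates of the new rightmost block are exactly the orthogonal image $\Mb V_R$ of the old $d$-dimensional rightmost block, hence the new rightmost block has dimension at least $d$, and likewise at the left end. Since the transformed system is still a QMF system, its two new extremal blocks span mutually orthogonal subspaces of $\RR^{2d}$, so their dimensions sum to at most $2d$; combined with the lower bounds, each equals exactly $d$. This restores the induction hypothesis and lets the peeling continue down to a single block, your $N=1$ case. Your closing observation --- that the hypothesis ``equal to $d$'' rather than ``$\le d$'' is what makes the induction close --- is correct, but it is precisely this inheritance lemma, which you omit, that substantiates it.
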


\begin{proof}
For length equal to $2$ nothing has to be proved.

That the dimensions of the outermost blocks are at most $d$ is clear, since it is obtained
by applying a linear map to a vector space of dimension at most $d$.

Vice versa, suppose that the outermost blocks for either an even or odd covering 
are each  $d$ dimensional. 
By orthogonality of the basic vector sequences, all the extremal blocks on the left are
orthogonal to all the extremal blocks on the right. Since the dimension
of each of these blocks is $d$ it is possible to find an orthogonal matrix 
such that the left block is mapped into a block having only
zeros to the left, and the right block is mapped into a blocking
having zeros to the right. Therefore the length of the filters is reduced by
$2$. 
Now, the dimension of each of the new extremal blocks is at least $d$. 
Indeed, consider the new right end block. The outermost parts, which correspond to the
lower $d$ components of the $2d$ dimensional block vector are $d$ dimensional since they
correspond to the orthogonal image of the previous $d$-dimensional right-most block. 
The same holds for the new left-most block. 
Since the new extremal blocks are again orthogonal, their dimension is equal to $d$.

This concludes an induction argument.
\end{proof}

\section{Examples}
From the results presented in the previous section, it turns out that the actual challenge in the construction of matrix filters is the choice of the orthogonal matrices $\Mb_k$ in the $d(2d-1)$-dimensional Lie group $O(2d)$.
\par
To give  examples of such construction, we  restrict ourselves to the case of the  special orthogonal group $SO(2d)$ of rotations.

\subsection{Construction of a two-channel filter bank}
 In this first example, we let $d=2$ and consider a representation of  $SO(4)$ in terms of  products of Givens rotation matrices $ \Gb(\ell,m,\theta)$, with $1\le\ell<m\le 4$,  whose only  non-zero elements $g_{ij}$ are given by:
 $$g_{kk}=1, \, k\ne \ell,m\quad 
 g_{\ell \ell}=g_{m m}=\cos\theta,\quad  g_{\ell m}=-g_{m \ell}=\sin\theta.$$
Let us take, as initial set of QMF filters, the  Haar matrix filter bank, 
where:
$$\Ab(0)=\Ab(1)= \Bb(0)=-\Bb(1)=\Ib.$$
We first consider a  6-parameter rotation transformation on the the following sequence of $2\times 2$ matrices:
$$\left( \cdots, \, \left[ \begin{array}{cc}\BZero & \BZero \\ \BZero  & \BZero  \end{array}\right],\, \left[ \begin{array}{cc}\BZero & \BZero\\ \Ib & \Ib \end{array} 
\right],\,
\left[ \begin{array}{cc}\Ib & -\Ib \\ \BZero  & \BZero  \end{array}\right],\,
\left[ \begin{array}{cc}\BZero & \BZero \\ \BZero  & \BZero  \end{array}\right]
\cdots \right)$$
This transformation produces two  4-length sequences $\widetilde \Ab(k), \widetilde \Bb(k)$, $k=0,\ldots, 3$ depending on 6 parameters. They can then be arranged as
$$\left( \cdots, \, \left[ \begin{array}{cc}\BZero & \BZero \\ \BZero  & \BZero  \end{array}\right],\,
 \left[ \begin{array}{cc}\widetilde \Ab(0) & \widetilde \Bb(0)\\ \widetilde \Ab(1) & \widetilde \Bb(1) \end{array} 
\right],\,
 \left[ \begin{array}{cc}\widetilde \Ab(2) & \widetilde \Bb(2)\\ \widetilde \Ab(3) & \widetilde \Bb(3) \end{array} 
\right],\,
\left[ \begin{array}{cc}\BZero & \BZero \\ \BZero  & \BZero  \end{array}\right]
\cdots \right)$$
and transformed through another 6-parameter rotation.
The resulting filter bank consists of two 6-length sequences $ \widehat \Ab(k), \widehat \Bb(k)$, $k=0,\ldots, 5$ depending on 12 parameters. These degrees of freedom can be  exploited to impose additional constraints. In particular, we can require the full rank constraint (\ref{eq:fr}) and the second order  sum rule condition (which produces an additional vanishing moment on the wavelet filter):
$$\sum_{k\in\ZZ}(-1)^k k \Ab(k)=\BZero.$$
As an example, let $S(\theta)$, with $\theta=[\theta_1,\theta_2,\theta_3,\theta_4,\theta_5,\theta_6]$, be the $6$-parameter rotation matrix obtained as the following product of Givens matrices:
$$S(\theta)=\Gb(1,2,\theta_4)\Gb(3,4,\theta_3)\Gb(2,3,\theta_2)\Gb(1,4,\theta_1)\Gb(1,3,\theta_6)\Gb(2,4,\theta_5),$$
explicitly given by
\begin{center}
{$\left[ \begin {array}{cccc} c_{6}c_{1}c_{4}-s_{6}s_{2}s_{4
}&-s_{5}s_{1}c_{4}+c_{5}c_{2}s_{4}&s_{6}c_{1}c_{4}+
c_{6}s_{2}s_{4}&c_{5}s_{1}c_{4}+s_{5}c_{2}s_{4}
\\\noalign{\medskip}-c_{6}c_{1}s_{4}-s_{6}s_{2}c_{4}&s_{5
}s_{1}s_{4}+c_{5}c_{2}c_{4}&-s_{6}c_{1}s_{4}+c_{6}s
_{2}c_{4}&-c_{5}s_{1}s_{4}+s_{5}c_{2}c_{4}
\\\noalign{\medskip}-s_{6}c_{2}c_{3}-c_{6}s_{1}s_{3}&-c_{
5}s_{2}c_{3}-s_{5}c_{1}s_{3}&c_{6}c_{2}c_{3}-s_{6}s
_{1}s_{3}&-s_{5}s_{2}c_{3}+c_{5}c_{1}s_{3}
\\\noalign{\medskip}s_{6}c_{2}s_{3}-c_{6}s_{1}c_{3}&c_{5}
s_{2}s_{3}-s_{5}c_{1}c_{3}&-c_{6}c_{2}s_{3}-s_{6}s_
{1}c_{3}&s_{5}s_{2}s_{3}+c_{5}c_{1}c_{3}\end {array}
 \right] 
$}
\end{center}
with
$$ c_i=\cos(\theta_i),\,
s_i=\sin(\theta_i), \, i=1,\ldots,6.$$
Let now  
$\Mb_0=S(\phi)$,  $\Mb_1=S(\psi)$ be the rotations (depending on the 12 free parameters
$\phi_1,\ldots,\phi_6,\psi_1,\ldots,\psi_6$) respectively used for the first and the second transformation.
\par
The 12 degrees of freedom  can  be fully exploited to impose the 12 full rank/sum rule conditions.
The solution is given in terms of the following parameter vectors:
\begin{center}
{\small$ \phi=[-1.530817,  -2.054355,  -2.642328, 0.495166,1.413293,1.728299]$},
{\small $\psi=[ -2.345058,  2.382453,  -1.422064,  -1.696487,  1.165227,  -1.439620]$,}
\end{center}
which give rise to filters convergent to the matrix scaling function and wavelet illustrated in Fig.~\ref{fig:examplescalwav}.

%
%
%
%

\begin{figure}[h]
\begin{center}
\fbox{\includegraphics[width=5.5cm]{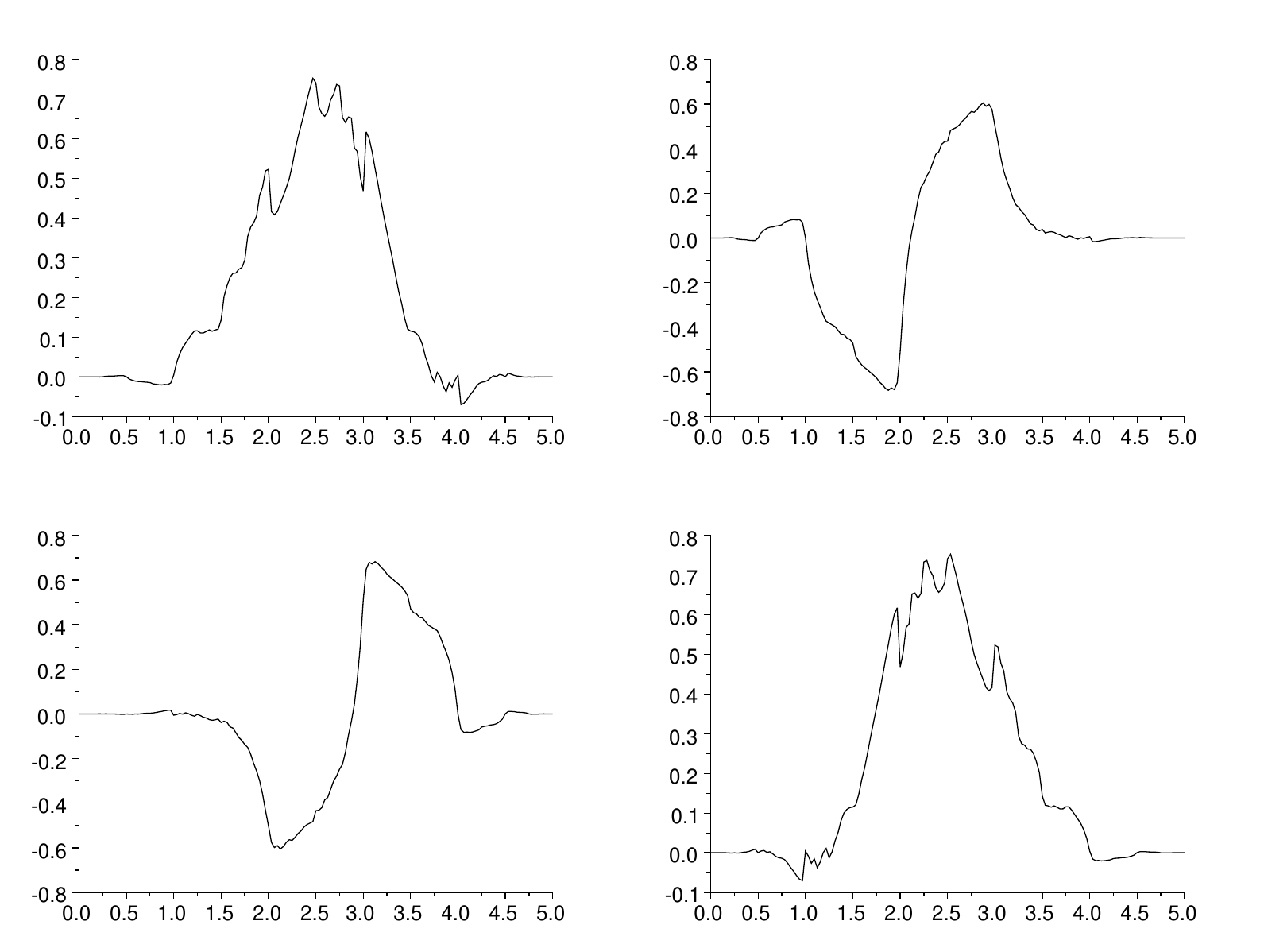}}\ 
\fbox{\includegraphics[width=5.5cm]{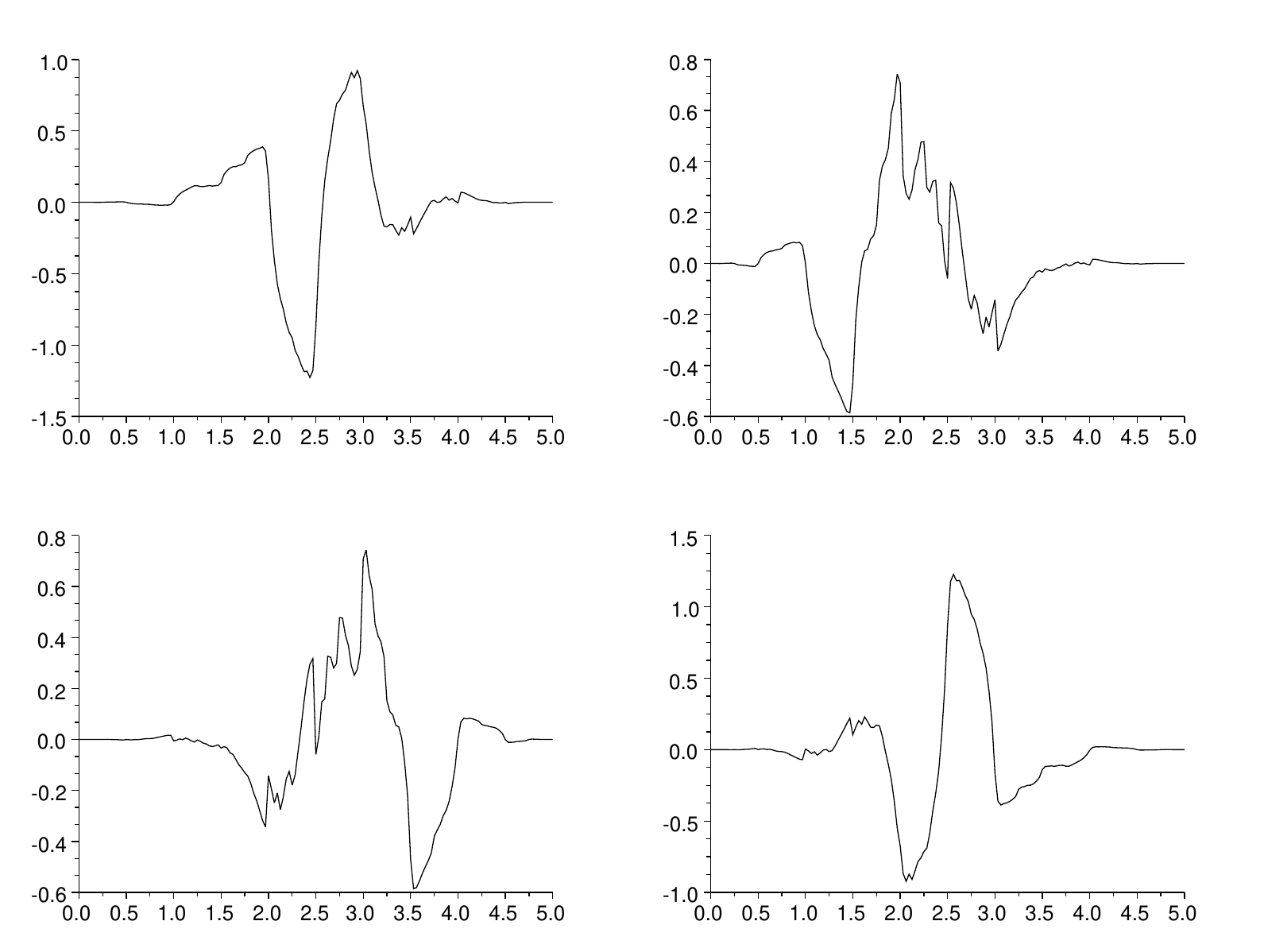}}
\caption{The four components of the 2-channel scaling function (left) and wavelet (right) associated to the  parameters obtained in the example in Section 4.1.}\label{fig:examplescalwav}
\end{center}
\end{figure}

\subsection{Construction of a 4-parameter family}
In this subsection we propose a special construction of a family of filters with $d=2$ and support length $\leq4$.
 \par
Instead of working with the group $SO(4)$ itself, we consider a representation of its elements in terms of infinitesimal generators, given by the following elements of its Lie algebra:
$$
\Xb_{\alpha}=\Eb^{i,j}-\Eb^{j,i},\quad 1\leq i<j\leq4,\quad \alpha=\alpha(i,j)
$$
where $\Eb^{i,j}$ is the matrix whose elements are all zero except the element at the $i$-th row and $j$-th column, which is equal to one.
\par
We take the following ordering of the indices $(i,j)$ for the corresponding subscript $\alpha=\alpha(i,j)$, $1\le\alpha\le 6$:
$$
(1,2),\ (1,3),\ (1,4),\ (2,3),\ (2,4),\ (3,4).
$$
By taking exponentials of linear combinations, we obtain the  elements of the group  as 
$$
\Mb(\xib)= \exp\left(\sum_{\alpha=1}^6 \xi_\alpha \, \Xb_\alpha\right),
$$
where we have set $\xib=[\xi_1,\xi_2,\dots,\xi_6]^T\in \RR^6$.
\par
Let us now take two rotations parameterized by $\xib$ and $\xib^\prime$. 
Starting from the Haar system, 
whose polyphase matrix is given by:
$$
\Hb(z)= \left[ \begin{array}{cc}
        \Ib & \Ib\\
	\Ib & -\Ib
       \end{array}
\right],
$$
our construction gives us a $12$ parameter family of orthogonal 
filters with parameters $[\xib^T,{\xib^\prime}^T]^T\in\RR^{12}$. 
The symbols of these filters are given by
$$
\Lb(\xib,\xib^\prime; z) = 
\left[ \begin{array}{cc}
        \BZero & z\Ib\\
	\Ib & \BZero
       \end{array}
\right]\,
\Mb(\xib^\prime)\,
\left[ \begin{array}{cc}
        \BZero & \Ib\\
	z^{-1}\Ib & \BZero
       \end{array}
\right]\,
\Mb(\xib)
\Hb(z).
$$
Not all of them are different however. And not all of them are full rank either.
Indeed, consider two matrices in $\Ub,\Vb\in SO(2)$. The following choice of matrices in $SO(4)$ will all yield the
Haar system
$$
\Mb=\left[ \begin{array}{cc}
        \Ub & \BZero\\
	\BZero & \Vb
       \end{array}
\right],\quad
\Mb^\prime=\left[ \begin{array}{cc}
        \Vb^T & \BZero\\
	\BZero & \Ub^T
       \end{array}
\right].
$$
For small perturbations of the Haar system a linear analysis of the space of filters obtained that way may be performed.
For this we write
$$
\Lb(\xib,\xib^\prime;z)= z^{-1} \Lb_{-1}(\xib,\xib^\prime) 
+ z^{0} \Lb_{0}(\xib,\xib^\prime) + z^{1} \Lb_{1}(\xib,\xib^\prime). 
$$
This defines an embedding of the space of such filters into $\mathbb{R}^{3 \cdot 16}$ by taking the
coefficients of the matrices $\Lb_{k}$, $k=-1,0,1$.
\par
 Consider now the Jacobian of the
nonlinear mapping $(\xib,\xib^\prime)\mapsto \Lb(\xib,\xib^\prime;\cdot)\to\mathbb{R}^{3 \cdot 16}$ at the origin
and  call this linearization $\Gamma$. 
As elementary linear algebra shows, the rank is $10$ and hence the defect is $2$. A basis of its kernel is given by:
$$
\mbox{kern}\, \Gamma = \mbox{span}\{\gammab_1,\gammab_2\}
$$
with
$$
\gammab_1=[-1,0,0,0,0,0,0,0,0,0,0,1]^T
\quad\gammab_2=[0,0,0,0,0,-1,1,0,0,0,0,0]^T
$$
So at least locally we obtain a parameterization in terms of $\RR^{12}/\mbox{kern}\,\Gamma$. 
\par
We take as complementary system the following $10$ vectors written as matrix of column vectors
$$
\begin{array}{|r|r|r|r|r|r|r|r|r|r|}
 \hline
 1 &0 &0 &0 &0 &0 &0 &0 &0 &0\\
 0 &1 &0 &0 &0 &0 &0 &0 &0 &0\\
 0 &0 &1 &0 &0 &0 &0 &0 &0 &0\\
 0 &0 &0 &1 &0 &0 &0 &0 &0 &0\\
 0 &0 &0 &0 &1 &0 &0 &0 &0 &0\\
 0 &0 &0 &0 &1 &0 &0 &0 &0 &0\\
 \hline
 0 &0 &0 &0 &0 &1 &0 &0 &0 &0\\
 0 &0 &0 &0 &0 &0 &1 &0 &0 &0\\
 0 &0 &0 &0 &0 &0 &0 &1 &0 &0\\
 0 &0 &0 &0 &0 &0 &0 &0 &1 &0\\
 0 &0 &0 &0 &0 &0 &0 &0 &0 &1\\
 1 &0 &0 &0 &0 &0 &0 &0 &0 &0\\
 \hline
\end{array}
$$
Note that the two directions $\gammab_1$ and $\gammab_2$ correspond exactly to the above noted
family of matrices which do not alter the Haar wavelets. 
\par
We now want to explore those directions that yield one parameter subgroups of full rank filters. 
Only a sufficient condition will be given. The condition of full rank requires that
$$
\Lb(\xib,\xib^\prime;1)=\Hb(1).
$$
We are unable to solve this system in general. However,
restricting our research to special solutions of the form $\Lb(t\xib,t\xib^\prime;z)$, $t\in\mathbb{R}$, we can construct a
finite dimensional submanifold of full rank filters spanned by the one-dimensional
abelian subgroup of rotations in $SO(4)\times SO(4)$  with their generators forming a  sub-vector space (of its Lie algebra). 
\par
Since for $t=0$ the system is full rank, it is enough to require that 
the derivative with respect to $t$ is $0$ for all $t$:
$$
\frac{d}{dt} \Lb(t\xib,t\xib^\prime;1)=0.
$$
Substituting this and deriving with
respect to $t$ yields the following matrix system of equations:

\begin{eqnarray*}
&&\left[ 
    \begin{array}{cc}
        \BZero & \Ib\\
	\Ib & \BZero
     \end{array}
\right]\, 
\Mb(t\xib)
\left( (\sum_\alpha \xib_\alpha \Xb_\alpha)\,
\left[ \begin{array}{cc}
        \BZero & \Ib\\
	\Ib & \BZero
       \end{array}
\right]\right.\\
&\quad&\quad\quad +
\left.
\left[ \begin{array}{cc}
        \BZero & \Ib\\
	\Ib & \BZero
       \end{array}
\right]
(\sum_\alpha \xib_\alpha^\prime \Xb_\alpha)
\right) \cdot\Mb(t\xib^\prime)\,
\left[ \begin{array}{cc}
        \Ib & \Ib\\
	\Ib & -\Ib
       \end{array}
\right] =
\left[ \begin{array}{cc}
        \BZero & \BZero\\
	\BZero & \BZero
       \end{array}
\right] .
\end{eqnarray*}

A sufficient condition is given by the inner parenthesis to vanish. This yields a linear system of $16$ 
equations for the $12$ parameters $\xib$, $\xib^\prime$. Due to linear dependencies among the equations
the system has a $6$ dimensional solution space which is spanned by the following $6$ vectors
\begin{center}
\begin{tabular}{|r|r|r|r|r|r|} 
0 &-1 &0 &0 &0 &0\\
0 & 0 &0 &1 &0 &0\\
0 & 0 &0 &0 &1 &0\\
1 & 0 &0 &0 &0 &0\\
0 & 0 &1 &0 &0 &0\\
0 & 0 &0 &0 &0 &-1\\
\hline
0 & 0 &0 &0 &0 &1\\
0 & 0 &0 &1 &0 &0\\
1 & 0 &0 &0 &0 &0\\
0 & 0 &0 &0 &1 &0\\
0 & 0 &1 &0 &0 &0\\
0 & 1 &0 &0 &0 &0
\end{tabular}
\end{center}
The family of full rank filter banks that are generated by rotations associated with the 
corresponding infinitesimal rotations however is only $4$ dimensional.  Indeed, as may be verified through elementary linear algebra, the 
intersection of the linear span  of these vectors and the $10$ dimensional 
space of directions of local filter systems is a four dimensional vector space
spanned by the following column vectors, which,
for the convenience of the reader, we have splitted  into the respective part for  $\xib^\prime$ and $\xib$ 
corresponding to the outer and inner rotation matrices:
\begin{center}
\begin{tabular}{|r|r|r|r|}
\multicolumn{4}{|c|}{$\xib^\prime$}\\
\hline
0 &0  &0 &0\\
0 &0  &0 &1\\
1 &0  &0 &0\\
0 &0  &1 &0\\
0 &1  &0 &0\\
0 &1  &0 &0\\
\hline
\end{tabular}
\quad
\begin{tabular}{|r|r|r|r|}
\multicolumn{4}{|c|}{$\xib$}\\
\hline
0 &-1 &0 &0\\
0 &0  &0 &1\\
0 &0  &1 &0\\
1 &0  &0 &0\\
0 &1  &0 &0\\
0 &0  &0 &0\\
\hline
\end{tabular}
\end{center}
If we denote with $\Gb_{\xi'},\Gb_{\xi}$ the matrices of dimension $6\times4$ corresponding to the above column vectors and let $\pb=[\eta, \theta, \omega,\zeta]^T$ be the vector containing the free parameters, then the final form of our parameterization is expressed in terms of the following vectors:
$$\xib^\prime=\Gb_{\xi'}\,\pb=\left[ \begin {array}{cccccc} 0&\zeta&\eta&\omega&\theta&\theta
\end {array} \right], \quad \xib= \Gb_{\xi}\,\pb=\left[ \begin {array}{cccccc} -\theta&\zeta&\omega&\eta&\theta&0
\end {array} \right] 
$$
An expression for the filters $\Ab(z)$ and $\Bb(z)$ depending on the above four parameters is however difficult to give, due to the presence of the exponentials of the matrix sums $\sum_\alpha \xi_\alpha \Xb_\alpha$, $\sum_\alpha \xi^\prime_\alpha \Xb_\alpha$. 
\par
Strong simplifications occur when we set all but one direction parameters to zero. All of the corresponding 
 one-parameter filter families can thus be explicitly given.
 \par
 The family corresponding to the only non-zero direction parameter $\eta$ is given in terms of the following symbols:
\begin{equation}\Ab(z)=(z+1)\left[
 \begin{array}{cc}
 a^2z+(1-a^2) & (1-z)a\sqrt{1-a^2}\\
 z(1-z) a\sqrt{1-a^2}  &  (1-a^2)z^2+a^2z
 \end{array}\right]
 \label{eq:firstA}
 \end{equation}
 
 \begin{equation}
 \Bb(z)=(1-z)\left[
 \begin{array}{cc}
 a^2z-(1-a^2) & (1+z) a\sqrt{1-a^2}\\
 -z(z+1) a\sqrt{1-a^2}  &  (1-a^2)z^2-a^2z
 \end{array}\right] \label{eq:firstB}
\end{equation}
 where we have set $a=\cos \eta$ (see  Fig. 3).
 \par
 Observe that $\Bb(z)$ is obtained as $\Pb \Ab(-z)\Pb$, where
 $\Pb=\left[ \begin{array}{cc} 1 &0 \\ 0&-1\end{array} \right]$.
 \par
 As to the convergence of such filters to matrix scaling function/wavelets, observe that the autocorrelation symbol
 $\Cb(z)=\frac 12 \Ab^H (z) \Ab(z)$
 is given by $\Cb(z)= \frac{(z+1)^2}{2z}\Ib$. The positive definiteness of $\Cb(z)$ assures the convergence of $\Ab(z)$ to an $L^2(\RR)^d$ orthogonal matrix scaling function as proved in \cite{ContiCotroneiSauer07S}.
 \par
 Note that essentially the same family is attained taking $\omega$ as unique non zero parameter. In such case only the role of $\xi$ and $\xi^\prime$ exchange.
 
 \begin{figure}[h]
\begin{center}
\fbox{\includegraphics[width=5.5cm]{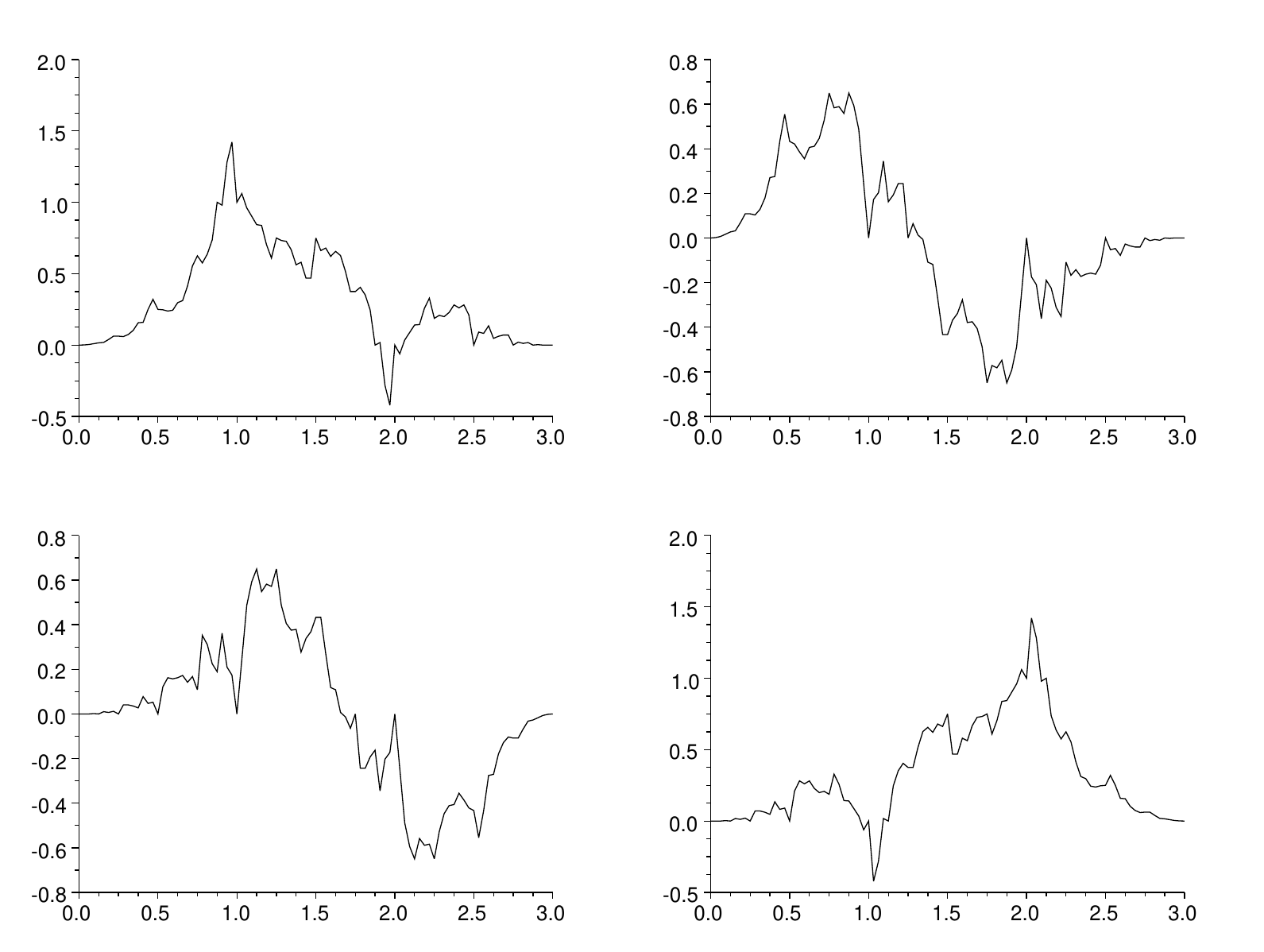}}\ 
\fbox{\includegraphics[width=5.5cm]{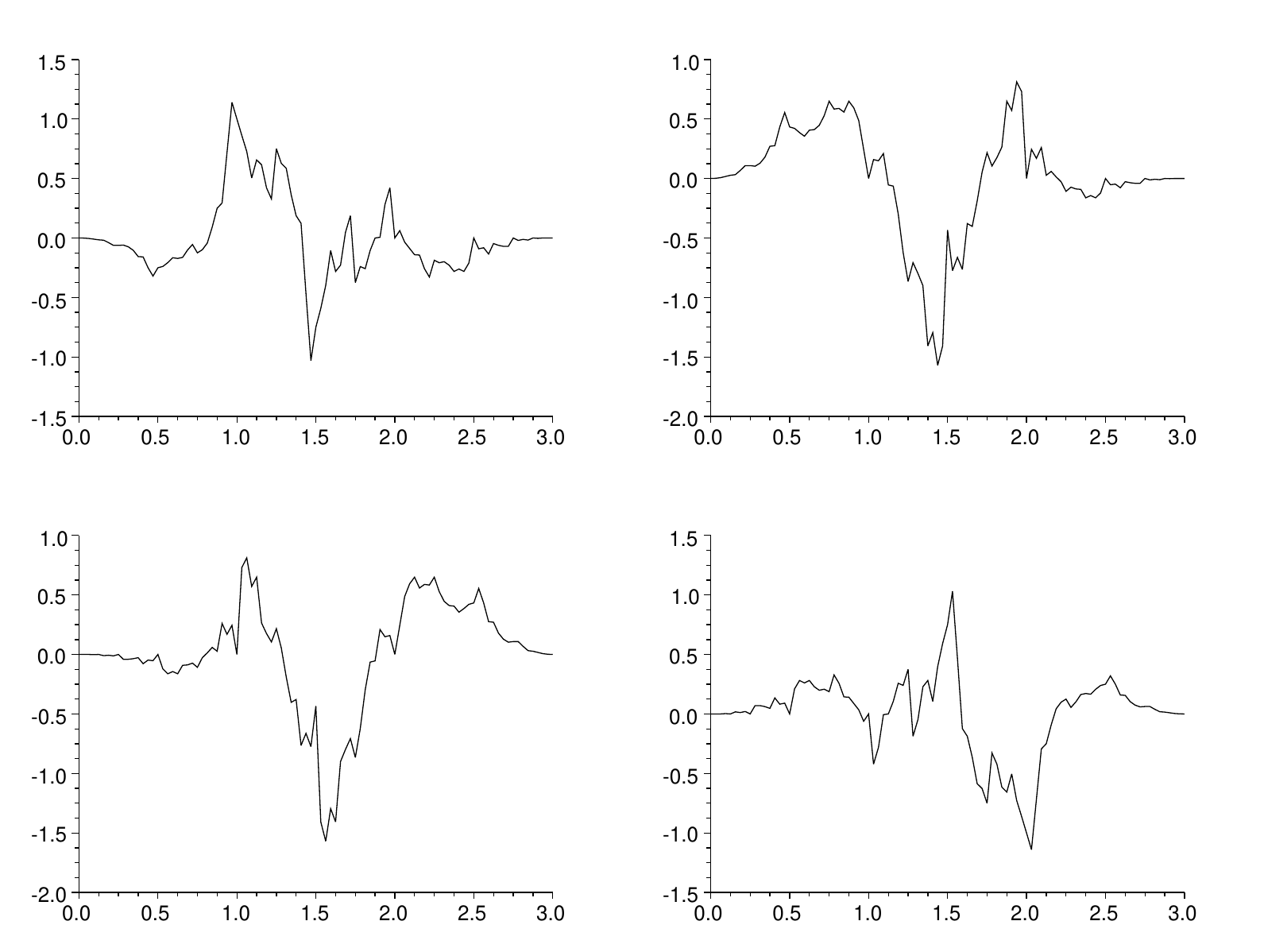}}
\end{center}
\caption{The four components of the 2-channel scaling function (left) and wavelet (right) associated to the symbols (\ref{eq:firstA}), (\ref{eq:firstB}) with $a=\cos(\pi/6)$}\label{fig:fig3}
\end{figure}

 \par
 Let us now consider the case of a second family, obtained by letting $\theta$ be the only non zero  parameter 
 in $\xi, \xi^\prime$. By setting $a=\cos(\sqrt{2}\,\theta$),  $\Ab(z)=\frac 14 \,(z+1)\tilde \Ab(z)$, $\Bb(z)=\frac 14\,(1-z) \tilde \Bb(z)$ the elements of  $\tilde \Ab(z)$ and $\tilde \Bb(z)$ read as:
\begin{eqnarray}\label{eq:secondA}
\nonumber
 \tilde  \Ab_{11}(z)&=&(a-1)^2 z^2 +(3+2a-a^2)z\\\nonumber
\tilde \Ab_{12}(z)&=&z(1-z)(1-a)\left(1+a-\sqrt{2(1-a^2)}\right)\\
\tilde \Ab_{21}(z)&=&(1-z)(1-a)\left(1+a-z\sqrt{2(1-a^2)}\right)\\\nonumber
\tilde \Ab_{22}(z)&=&(a+1)\left(2-2a-\sqrt{2(1-a^2)}\right)z^2+(3a^2+2a-1)z\\\nonumber
&&\quad +\left(3-a^2-2a+(a+1)\sqrt{2(1-a^2)}\right)
       \end{eqnarray}
\begin{eqnarray}\label{eq:secondB}
\nonumber
 \tilde  \Bb_{11}(z)&=&(a-1)^2 z^2 +(a^2-2a-3)z\\\nonumber
\tilde \Bb_{12}(z)&=&z(z+1)(1-a)\left(1+a+\sqrt{2(1-a^2)}\right)\\
\tilde \Bb_{21}(z)&=&(z+1)(1-a)\left(-1-a+z\sqrt{2(1-a^2)}\right)\\\nonumber
\tilde \Bb_{22}(z)&=&(a+1)\left(2-2a+\sqrt{2(1-a^2)}\right)z^2+(1-3a^2-2a)z\\\nonumber
&&\quad +\left(3-a^2-2a-(a+1)\sqrt{2(1-a^2)}\right)
\end{eqnarray}

\par
Elementary computations show that also in this case the corresponding autocorrelation symbol 
$\Cb(z)$ is positive definite, which indicates the convergence of the filters to orthogonal matrix scaling functions/wavelets for any value of the parameter $a$ in $[-1,1]$ (see Fig. 4).

\begin{figure}[h]
\begin{center}
\fbox{\includegraphics[width=5.5cm]{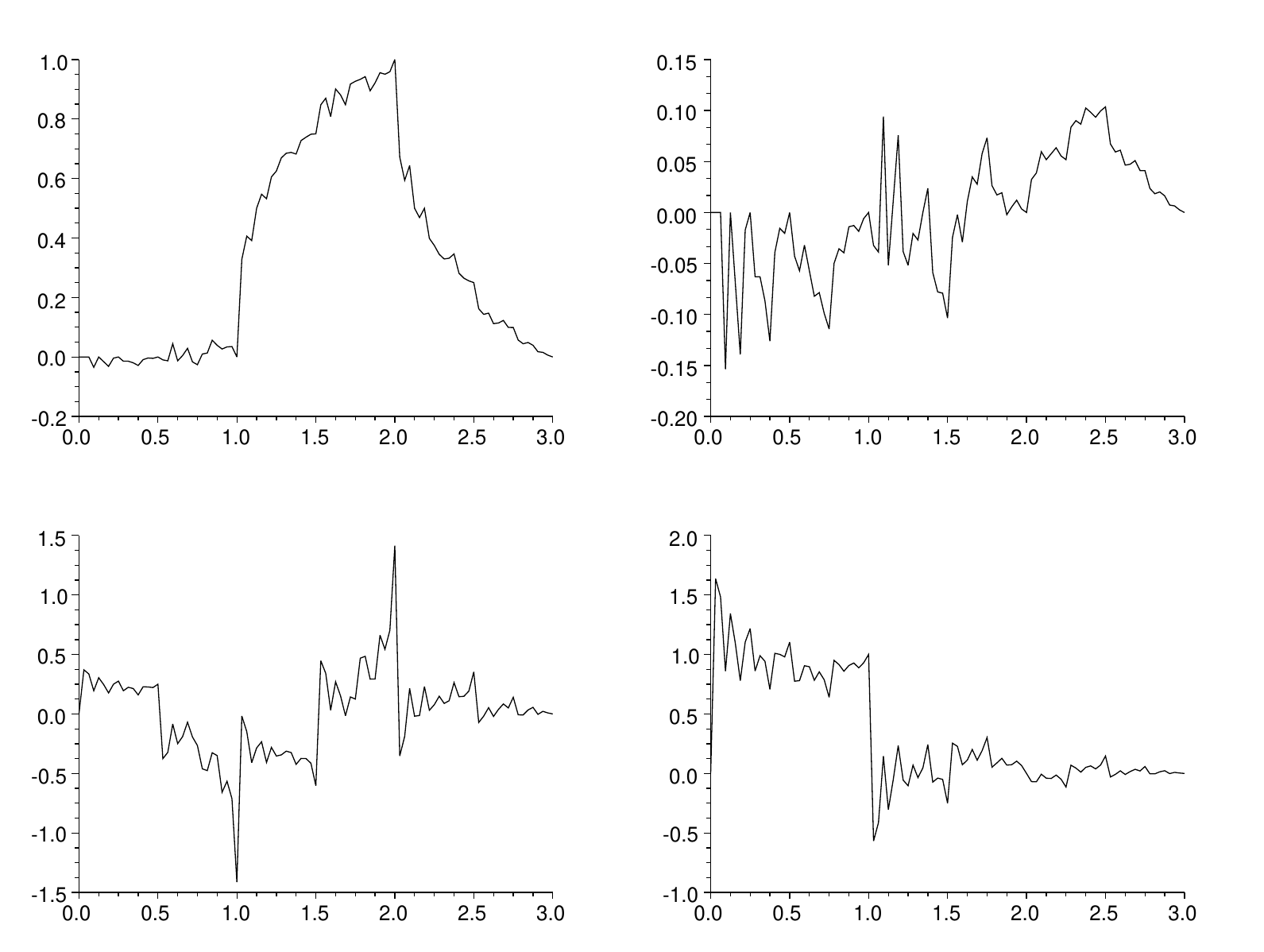}}\ 
\fbox{\includegraphics[width=5.5cm]{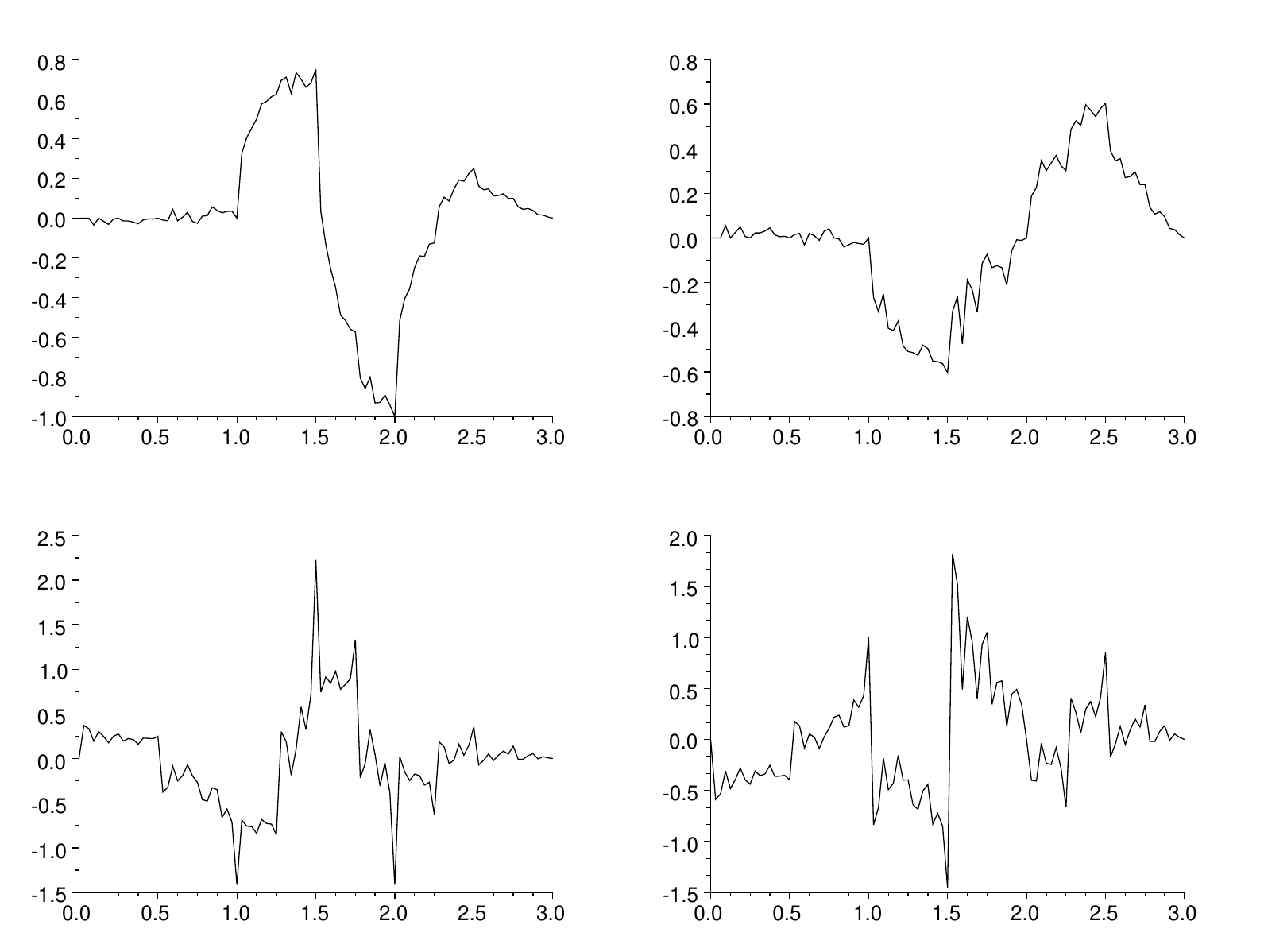}}
\end{center}
\caption{The four components of the 2-channel scaling function (left) and wavelet (right) associated to the symbols (\ref{eq:secondA}), (\ref{eq:secondB}) with $a=0$}\label{fig:fig4}
\end{figure}

\par
The last one parameter family is that obtained by setting all parameters except $\zeta$ to zero. However, this is a trivial case, which means that, setting $a=\sin(\zeta)$,  the symbols $\Ab(z)$ and $\Bb(z)$ reduce to the diagonal forms:
{ \begin{equation}\Ab(z)=(z+1)\left[
 \begin{array}{cc}
 (a^2-a\sqrt{1-a^2})z +1-2a^2 +(a^2+a\sqrt{1-a^2})z^{-1}& 0\\
 0 &  1
 \end{array}\right]\end{equation}}
 { \begin{equation}\Bb(z)=(1-z)\left[
 \begin{array}{cc}
 (a^2+a\sqrt{1-a^2})z +2a^2-1 +(a^2-a\sqrt{1-a^2})z^{-1}& 0\\
 0 &  1
 \end{array}\right]\end{equation}}
and the scheme thus  reduces to a combination of two scalar schemes, namely the Haar scalar system plus a parameterized version of the Daubechies system (attained for $a=\sqrt{3}/{2}$). This last system is convergent for any $a\in [-1,1]\setminus \{0\}$.
 
\par
We conclude the section presenting an example of a 2 parameter family obtained by considering the situation 
$\theta=\zeta=0$. In this case, the argument of the exponential function is a sum of two commuting matrices and thus admits a simple expression as product of the exponentials of each matrix.
As a result, the explicit expressions of $\Ab(z)$ and $\Bb(z)=\Pb \Ab(-z)\Pb$ in terms of the two parameters $a=\sin(\eta)$, $b=\sin(\omega)$ are:
{ \begin{equation}\label{eq:fourthA}\small
\Ab(z)=(z+1)
\left[
 \begin{array}{cc}
b^2z^2+(1-a^2-b^2)z+a^2 & (1-z)\left(b\sqrt{1-b^2} z +a \sqrt{1-a^2}\right)\\
(1-z)\left(a\sqrt{1-a^2} z +b \sqrt{1-b^2}\right) &  a^2 z^2+(1-a^2-b^2)z+b^2
 \end{array}\right]\end{equation}
}
{ \begin{equation}\label{eq:fourthB}\small
\Bb(z)=(1-z)
\left[
 \begin{array}{cc}
-b^2z^2+(1-a^2-b^2)z-a^2 & (1+z)\left(a \sqrt{1-a^2}-b\sqrt{1-b^2} z\right)\\
(1+z)\left(b \sqrt{1-b^2}-a\sqrt{1-a^2}+\right) &  -a^2 z^2+(1-a^2-b^2)z-b^2
 \end{array}\right]\\
 \end{equation}
}

It is interesting to note that the case $a=b=1/2$ produces a symbol $\Ab(z)$ with an extra $(z+1)$ factor, which contains the quartic B-spline symbol (which gives rise to a non orthogonal scalar scheme) on the diagonal. 
However this scheme is essentially diagonal, in the sense explained in \cite{ContiCotroneiSauer07}, and it is equivalent to two independent Daubechies scalar schemes.

\begin{figure}[h]
\begin{center}
\fbox{\includegraphics[width=5.5cm]{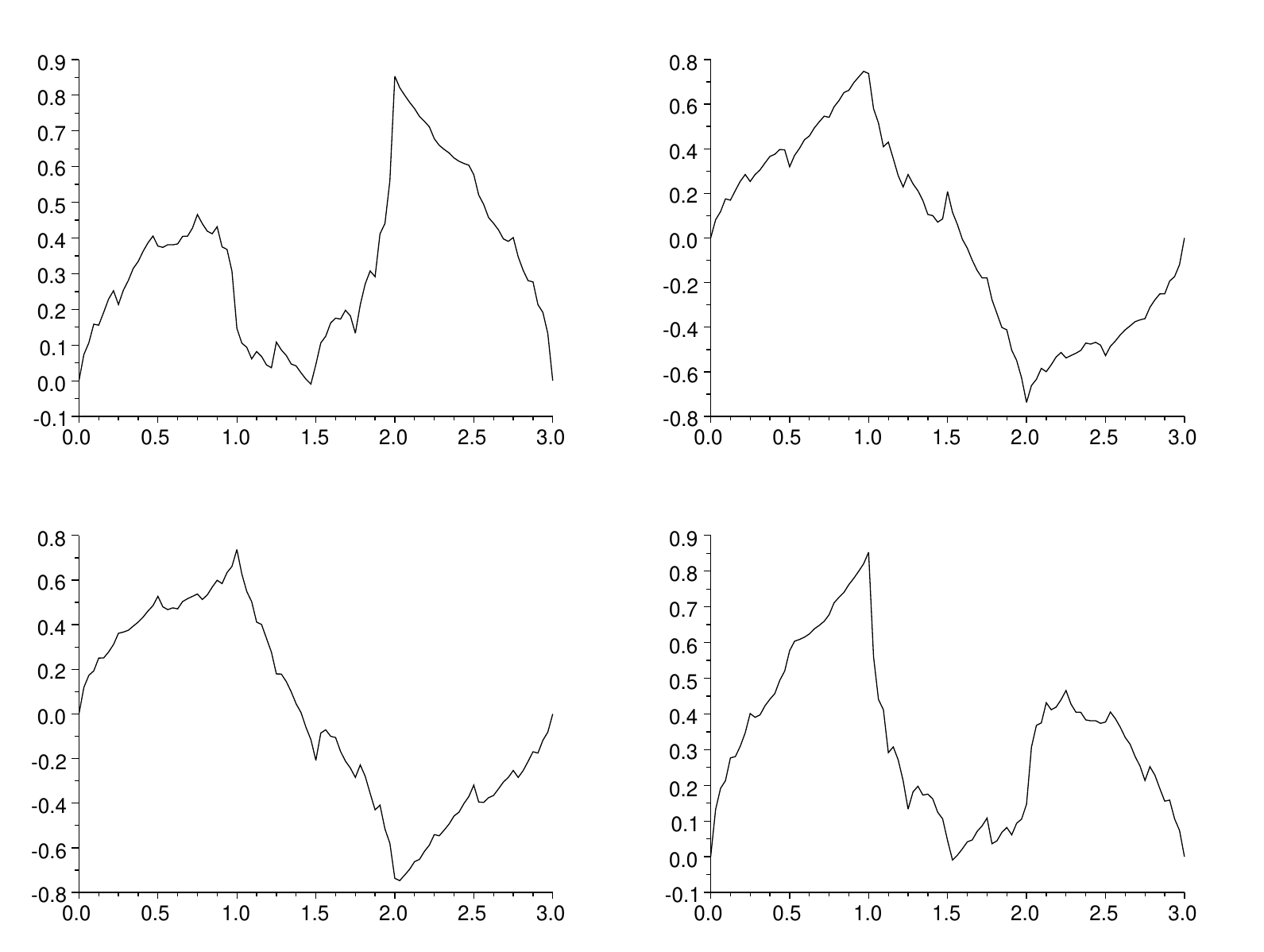}}\ 
\fbox{\includegraphics[width=5.5cm]{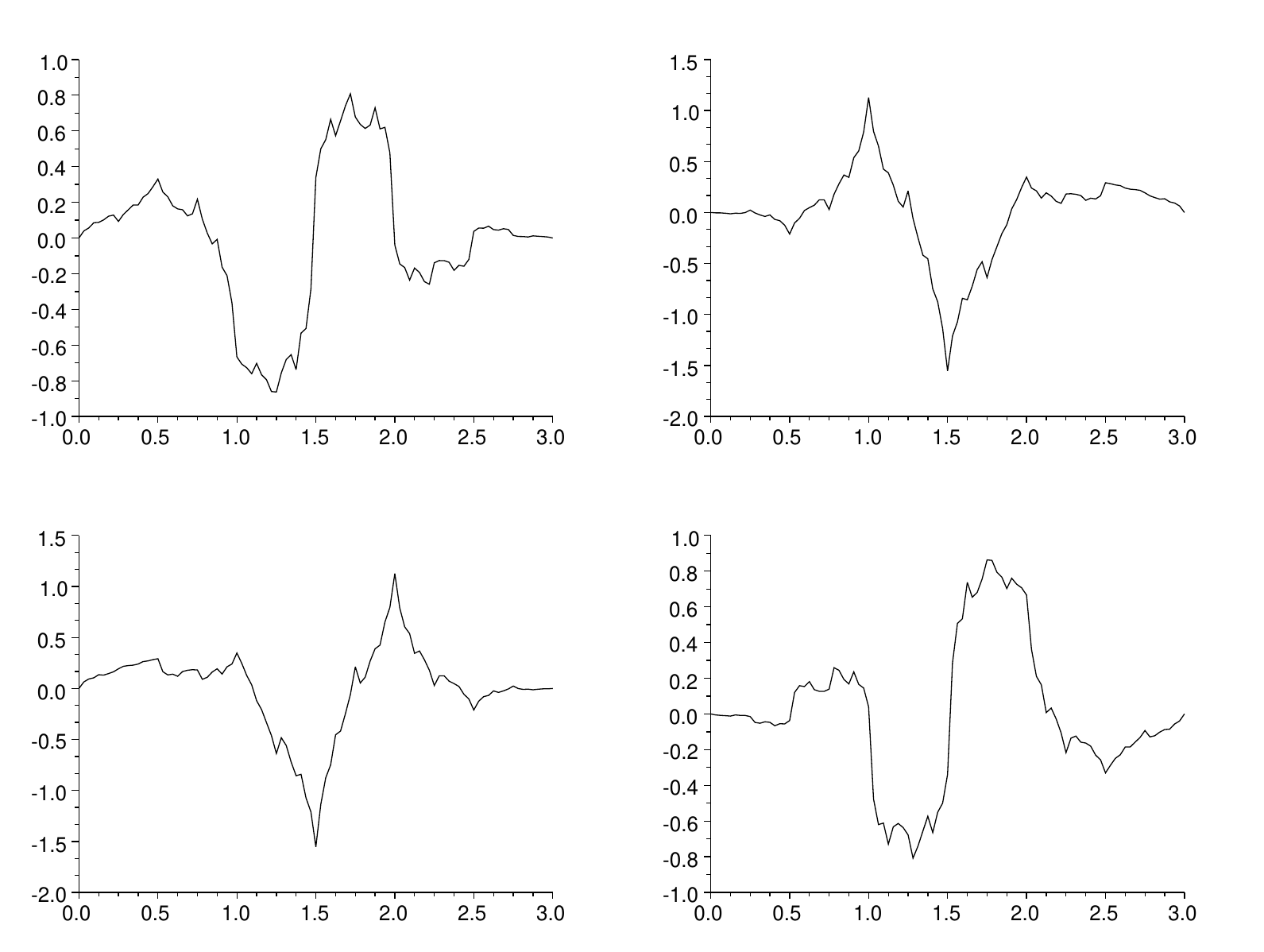}}
\end{center}
\caption{The four components of the 2-channel scaling function (left) and wavelet (right) associated to the two parameter symbols (\ref{eq:fourthA}), (\ref{eq:fourthB}) with $a=0.4$, $b=0.6$.}
\label{fig:fig5}
\end{figure}

\section{Conclusions}
This paper discusses a way to parameterize families of $d\times d$ matrix wavelet filters of full rank type
based on the one-to-one correspondence between QMF systems and orthogonal operators which commute with the shifts by two.
It provides some specific examples, in particular the special construction obtained in terms of elements of the Lie algebra of $SO(2d)$. Explicit expressions for the filters in the case $d=2$ are given, as a result of a local analysis of the parameterization obtained from perturbing the Haar system. This strategy can be used also to generate filters with $d>2$.
Indeed, the idea of the construction is  very general, and can even apply  to the bivariate nonseparable setting.
Furthermore, in all cases,   the parameterization can be exploited to  realize matrix systems  tailored to the specific applications.
Future research work will be carried on in this direction. 
\par
Moreover, we are pursuing a wide experimentation connected to the application of multichannel wavelet filters, including those constructed in this paper, which shows the advantages of this tools versus traditional scalar wavelet techniques. The first results will appear in a forthcoming paper.

 \appendix

 \section{Filter coefficients}
In this appendix, the explicit expressions of the filters taps derived in the Example section are given (see Tables A.1--A.4).

\begin{table}[H]
\renewcommand{\arraystretch}{1.3}
 \caption{Coefficients of the matrix filters associated to the functions plotted in fig.~\ref{fig:examplescalwav}}   \label{tab:CoeffFig2}
    \centering
        \begin{tabular}{cc}\hline
      $k$ & $\Ab(k)$   \\\hline
      $0$& $ \left[ \begin {array}{cc}  0.054311209333498010694&-
 0.16684440354507635408\\  0.047827189604400026824&-
 0.17618585179038651232\end {array} \right]
$      

 \\[0.3cm]
$1$&
$\left[ \begin {array}{cc}  0.087708930185872110064&-
 0.30362455474026860115\\ - 0.088952962252079719437&
 0.29041642693165752616\end {array} \right]
$
\\[0.3cm]
$2$&$\left[ \begin {array}{cc}  0.65527236381392715834&
 0.07789144188237754498\\ - 0.35145174509107688357&
 1.0884769216695398767\end {array} \right]
$
 \\[0.3cm]
$3$& $\left[ \begin {array}{cc}  1.088476921748056695&
 0.35145174524818843912\\ - 0.07789144192861210554&
 0.65527236401729840172\end {array} \right] 
$
  \\[0.3cm]
$4$& $
\left[ \begin {array}{cc}  0.29041642689141252135&
 0.088952962017450863579\\  0.30362455490066007842&
 0.087708930001853805185\end {array} \right]
$
  \\[0.3cm]
 $5$ &$
 \left[ \begin {array}{cc} - 0.17618585197276649584&-
 0.047827189524548346775\\  0.16684440342858505769&
 0.054311209170036901976\end {array} \right] 
$
 \\[0.3cm]\\\hline
       &  $\Bb(k)$ \\\hline
 $0$    
   &   $\left[ \begin {array}{cc}  0.0543112093345566594&-
 0.166844403526512642\\  0.0478271896021416066&-
 0.176185851776288344\end {array} \right]
    $
  \\[0.3cm]
$1$&  $ \left[ \begin {array}{cc}  0.0877089301796203924&-
 0.303624554700031024\\ - 0.088952962258738763&
 0.290416426927014915\end {array} \right] 
        $   
\\[0.3cm]
$2$&$  \left[ \begin {array}{cc} - 0.276396211477648534& 1.1144837374540959
\\ - 0.602889006833760122&- 0.248606647031728134
\end {array} \right]
 $
\\[0.3cm]
$3$
& $ \left[ \begin {array}{cc}  0.248606647088857046&-
 0.602889006560141438\\  1.11448373738822482&
 0.276396211518095014\end {array} \right]
    $
\\[0.3cm]
$4$
& $\left[ \begin {array}{cc} - 0.290416426894247737&-
 0.0889529620232064194\\ - 0.303624554889251552&-
 0.087708930001465138\end {array} \right]
     $
\\[0.3cm]
$5$
& $  \left[ \begin {array}{cc}  0.176185851960222128&
 0.0478271895207618912\\ - 0.166844403430672834&-
 0.0543112091753285287\end {array} \right] 
   $
\\
\hline

        \end{tabular}
\end{table}

\begin{table}[H]
\renewcommand{\arraystretch}{1.3}
 \caption{Coefficients of the matrix filters associated to the symbols (\ref{eq:firstA}), 
 (\ref{eq:firstB}), with $-1\le a \le 1$. The choice $a=\cos(\pi/6)$ leads to the functions plotted in Fig.~\ref{fig:fig3}}  \label{tab:CoeffFig3}
    \centering
        \begin{tabular}{ccc}\hline
      $k$ & $\Ab(k)$  & $\Bb(k)$ \\\hline
      $0$&        $ \left[ \begin {array}{cc} 1-{a}^{2}&a\sqrt {1-{a}^{2}}
\\0&0\end {array} \right] 
$
 &
 $ \left[ \begin {array}{cc} 1-{a}^{2}&-a\sqrt {1-{a}^{2}}
\\0&0\end {array} \right] 
$ \\[0.3cm]
$1$&$ \left[ \begin {array}{cc} 1&0\\ a\sqrt {1-{a}^{2}}&{
a}^{2}\end {array} \right]$
& $\left[ \begin {array}{cc} -1&0\\a\sqrt {1-{a}^{2}}&
-{a}^{2}\end {array} \right]$\\[0.3cm]
$2$&$
\left[ \begin {array}{cc} {a}^{2}&-a\sqrt {1-{a}^{2}}
\\0&1\end {array} \right] $ &
$  \left[ \begin {array}{cc} {a}^{2}&a\sqrt {1-{a}^{2}}
\\0&1\end {array} \right]$
\\[0.3cm]
$3$&$
\left[ \begin {array}{cc} 0&0\\-a\sqrt {1-{a}^{2}}&
1-{a}^{2}\end {array} \right]
$ & $\left[ \begin {array}{cc} 0&0\\-a\sqrt {1-{a}^{2}}&
a^{2}-1\end {array} \right]
$
        \end{tabular}
\end{table}

\begin{table}[H]
\renewcommand{\arraystretch}{1.3}
\caption{Coefficients of the matrix filters associated to the symbols (\ref{eq:secondA}), 
 (\ref{eq:secondB}), with $-1\le a \le 1$. The choice $a=0$ leads to the functions plotted in Fig.~\ref{fig:fig4}} 
    \label{tab:CoeffFig4}
    \centering
        \begin{tabular}{cc}\hline
      $k$ & $4\Ab(k)$  \\\hline
      $0$    
 & $\left[ \begin {array}{cc} 0&0\\ 1-{a}^{2}&-2\,a+
(a+1)\sqrt {2(1-\,{a}^{2})}+3-{a}^{2}\end {array}
 \right]
$       
 
 \\[0.3cm]
$1$&
$\left[ \begin {array}{cc} 3+2\,a-{a}^{2}& \left( a-1 \right)  \left( 
-1-a+\sqrt {2(1-\,{a}^{2})} \right) \\ \left( a-1
 \right) \sqrt {2(1-\,{a}^{2})}&
 2\,{a}^{2}+2+(a+1)\sqrt {2(1-\,{a}^{2})}\end {array} \right]
$
\\[0.3cm]
$2$&$ \left[ \begin {array}{cc} 4&0\\-1+{a}^{2}&{a}^{2}-
(a+1)\sqrt {2(1-\,{a}^{2})}+1+2\,a\end {array} \right]
$
 
\\[0.3cm]
$3$&$\left[ \begin {array}{cc}  \left( a-1 \right) ^{2}&- \left( a-1
 \right)  \left( -1-a+\sqrt {2(1-\,{a}^{2})} \right) 
\\- \left( a-1 \right) \sqrt {2(1-\,{a}^{2})}&-
 \left( a+1 \right)  \left( -2+2\,a+\sqrt {2(1-\,{a}^{2})} \right) 
\end {array} \right] 
$
\\[0.3cm]\\\hline
       &  $4\Bb(k)$ \\\hline
      $0$    
  
 &$\left[ \begin {array}{cc} 0&0\\ -1+{a}^{2}&-2\,a-
(a+1)\sqrt {2(1-\,{a}^{2})}+3-{a}^{2}\end {array}
 \right]
$
 
 \\[0.3cm]
$1$&
$\left[ \begin {array}{cc} -3-2\,a+{a}^{2}&- \left( a-1 \right) 
 \left( 1+a+\sqrt {2(1-\,{a}^{2})} \right) \\ - \left( 
a-1 \right) \sqrt {2(1-\,{a}^{2})}&-2\,{a}^{2}-2+(a+1)\sqrt {2(1-\,{a}^{2})}\end {array} \right] 
$
\\[0.3cm]
$2$&$\left[ \begin {array}{cc} 4&0\\ 1-{a}^{2}&{a}^{2}+
(a+1)\sqrt {2(1-\,{a}^{2})}+1+2\,a\end {array} \right]
$
\\[0.3cm]
$3$
& $\left[ \begin {array}{cc} - \left( a-1 \right) ^{2}& \left( a-1
 \right)  \left( 1+a+\sqrt {2(1-\,{a}^{2})} \right) \\ 
 \left( a-1 \right) \sqrt {2(1-\,{a}^{2})}&- \left( a+1 \right)  \left( 
2-2\,a+\sqrt {2(1-\,{a}^{2})} \right) \end {array} \right] 
$
\\
\hline

        \end{tabular}
\end{table}

\begin{table}[H]
\renewcommand{\arraystretch}{1.3}
 \caption{Coefficients of the matrix filters associated to the symbols (\ref{eq:fourthA}), 
 (\ref{eq:fourthB}), with $-1\le a,b \le 1$. The choice $a=b=1/2$ leads to the functions plotted in Fig.~\ref{fig:fig5}} 
  \label{tab:CoeffFig5}
    \centering
        \begin{tabular}{ccc}\hline
      $k$ & $\Ab(k)$  & $\Bb(k)$ \\\hline
      $0$&   
       $\left[ \begin {array}{cc} {a}^{2}&a\sqrt {1-{a}^{2}}
\\ b\sqrt {1-{b}^{2}}&{b}^{2}\end {array} \right]
$    
 &$\left[ \begin {array}{cc} {a}^{2}&-a\sqrt {1-{a}^{2}}
\\ -b\sqrt {1-{b}^{2}}&{b}^{2}\end {array} \right]
$
  \\[0.3cm]
$1$&
$ \left[ \begin {array}{cc} 1-{b}^{2}&b\sqrt {1-{b}^{2}}
\\ a\sqrt {1-{a}^{2}}&1-{a}^{2}\end {array} \right] 
$
& $\left[ \begin {array}{cc} -1+{b}^{2}&b\sqrt {1-{b}^{2}}
\\ a\sqrt {1-{a}^{2}}&-1+{a}^{2}\end {array} \right]$
\\[0.3cm]
$2$&$\left[ \begin {array}{cc} 1-{a}^{2}&-a\sqrt {1-{a}^{2}}
\\ -b\sqrt {1-{b}^{2}}&1-{b}^{2}\end {array} \right]
$
 &$ \left[ \begin {array}{cc} 1-{a}^{2}&a\sqrt {1-{a}^{2}}
\\ b\sqrt {1-{b}^{2}}&1-{b}^{2}\end {array} \right]
$

\\[0.3cm]
$3$&$ \left[ \begin {array}{cc} {b}^{2}&-b\sqrt {1-{b}^{2}}
\\ -a\sqrt {1-{a}^{2}}&{a}^{2}\end {array} \right] 
$&
$\left[ \begin {array}{cc} -{b}^{2}&-b\sqrt {1-{b}^{2}}
\\ -a\sqrt {1-{a}^{2}}&-{a}^{2}\end {array} \right] 
$

        \end{tabular}
\end{table}




\bibliographystyle{elsarticle-num}


\end{document}